

\documentclass[draft]{amsart} 

\usepackage{amsmath}
\usepackage{amssymb}
\usepackage{amsthm}
\usepackage[dvipdfmx]{graphicx}
\usepackage{bm}
\usepackage{esint}
\usepackage[inline]{enumitem}
\usepackage{color}


\theoremstyle{plain}
\newtheorem{theorem}{Theorem}[section]
\newtheorem{corollary}[theorem]{Corollary}
\newtheorem{lemma}[theorem]{Lemma}
\newtheorem{proposition}[theorem]{Proposition}

\theoremstyle{definition}
\newtheorem{definition}[theorem]{Definition}
\newtheorem{remark}[theorem]{Remark}

\theoremstyle{remark}

\numberwithin{theorem}{section}
\numberwithin{equation}{section}




\newcommand{\R}{\mathbb{R}}


\newcommand{\dist}{\mathrm{dist}}
\newcommand{\diam}{\mathrm{diam}}

\newcommand{\cl}{\overline}

\newcommand{\loc}{\mathrm{loc}}



\DeclareMathOperator*{\divergence}{div}


\newcommand{\capacity}{\mathrm{cap}}


\newcommand{\trinorm}[1]
{{
    \left\vert\kern-0.20ex\left\vert\kern-0.20ex\left\vert
    #1 
    \right\vert\kern-0.20ex\right\vert\kern-0.20ex\right\vert
}}





\begin{document}


\title[Global H\"{o}lder solvability]{Global H\"{o}lder solvability of second order elliptic equations with locally integrable lower-order coefficients}
\author{Takanobu Hara}
\email{takanobu.hara.math@gmail.com}
\address{Graduate School of Science, Tohoku University, 6-3, Aramaki Aza-Aoba, Aoba-ku, Sendai, Miyagi 980-8578, Japan}
\date{\today}
\subjclass[2020]{35J25, 35J05, 35J08, 31B25} 
\keywords{potential theory, boundary value problem, boundary regularity}


\begin{abstract}
We prove the existence of globally H\"{o}lder continuous solutions to certain elliptic partial differential equations with lower-order terms.
Our result is applicable to coefficients controlled by a negative power of the distance from the boundary
of the domain
and significantly improves Theorem 8.30 in Gilbarg and Trudinger (1983).
The proof is derived by applying the strategy of Ancona (1986) to a new Morrey-type space.
\end{abstract}



\maketitle


\section{Introduction}\label{sec:introduction}

Regularity theory for second-order elliptic equations is central to the study of partial differential equations.
In particular, the existence of globally continuous solutions to Dirichlet problems is essential in both theoretical and applied contexts.

In this paper, we are concerned with the global H\"{o}lder solvability of the Dirichlet problem
\begin{equation}\label{eqn:DE}
\begin{cases}
- \divergence(A \nabla u) + \bm{b} \cdot \nabla u + \mu u
=
\nu & \text{in} \ \Omega, \\
u = g & \text{on} \ \partial \Omega.
\end{cases}
\end{equation}
Here, $\Omega$ is a bounded domain in $\R^{n}$ ($n \ge 2$),
$A \in L^{\infty}(\Omega)^{n \times n}$ is a matrix valued function satisfying the uniform ellipticity condition
\begin{equation}\label{eqn:A} 
|\xi|^{2}
\le
A(x) \xi \cdot \xi \le L |\xi|^{2} \quad \forall \xi \in \R^{n}, \ \forall x \in \Omega
\end{equation}
with a fixed constant $1 \le L < \infty$.
The  $g$ is a H\"{o}lder continuous function on the boundary $\partial \Omega$ of $\Omega$.
We further assume that $\Omega$ satisfies the capacity density condition
\begin{equation}\label{eqn:CDC}
\exists \gamma > 0 \quad
\frac{ \capacity( \cl{B(\xi, R)} \setminus \Omega, B(\xi, 2R)) }{ \capacity( \cl{B(\xi, R)}, B(\xi, 2R)) } \ge \gamma \quad \forall R > 0, \ \forall \xi \in \partial \Omega,
\end{equation}
where  $B(x, r)$ is a ball centered at $x$ with radius $r > 0$,
and $\capacity(K, U)$ is the relative capacity of an open set $U \subset \R^{n}$ and a compact set $K \subset U$, which is defined by
\begin{equation*}\label{eqn:variational_capacity}
\capacity(K, U)
:=
\inf
\left\{
\int_{\R^{n}} |\nabla u|^{2} \, dx \colon u \in C_{c}^{\infty}(U), \ u \ge 1 \, \text{on} \, K
\right\}.
\end{equation*}
We temporarily assume that $\bm{b} \in L^{2}_{\loc}(\Omega)^{n}$ and $\mu, \nu \in \mathcal{M}(\Omega)$,
where $\mathcal{M}(\Omega)$ is the set of all locally finite signed measures on $\Omega$. 

In the main theorem (Theorem \ref{thm:main}),
we establish the existence of a globally H\"{o}lder continuous solution to \eqref{eqn:DE}
under suitable Morrey-type conditions on $\bm{b}$, $\mu$ and $\nu$. 
Our main tool for dealing with lower-order terms is the traditional Fredholm alternative, and the normed space to which it is applied is new. 
As a consequence, well-known results on boundary regularity are significantly improved.

\subsection{Background}

There are many prior works for existence and regularity results of solutions to \eqref{eqn:DE} with various aspects.
We address \eqref{eqn:DE} using its divergence structure and refer to \cite{MR0244627, MR1814364} for basics of weak solutions.
These classical monographs used $L^{p}$ spaces as conditions on the coefficients,
but particularly since 1980s, sharp conditions for interior regularity estimates have been widely studied.
Local H\"{o}lder estimates for equations with Morrey coefficients was studied in, e.g., 
\cite{MR0271383, MR0964029, MR998128, MR1283326}.
See also \cite{MR644024, MR853783, MR1354887, MR1461542} for further information.
There are not a few results on boundary regularity as well, which will be discussed later.

Let us recall known results for the global H\"{o}lder regularity of weak solutions to
\begin{equation}\label{eqn:DE0}
\begin{cases}
- \divergence(A \nabla u)
=
0 & \text{in} \ \Omega, \\
u = g & \text{on} \ \partial \Omega.
\end{cases}
\end{equation}
It is well known that \eqref{eqn:CDC}  is sufficient for the desired global estimate.
See, e.g., \cite{MR163053, MR1814364}.
For further information, see also \cite[p.130]{MR2305115}. 
The proof consists of three major steps.
(i) Prove an interior regularity estimate.
(ii) Prove a regularity estimate at each boundary point by using \eqref{eqn:CDC} and the result of (i).
(iii) If the result of (ii) holds at all boundary points, then the desired global regularity follows.
As a consequence, if \eqref{eqn:CDC} holds, then the operator
\begin{equation}\label{eqn:harmonic_extention}
C^{\beta_{0}}(\partial \Omega) \ni g \mapsto u \in C^{\beta_{0}}(\cl{\Omega})
\end{equation}
is bounded for some $\beta_{0} \in (0, 1)$.
Here, for $E \subset \R^{n}$ and $\beta \in (0, 1)$,
$C^{\beta}(E)$ is the H\"{o}lder space endowed with the norm
\[
\| u \|_{C^{\beta}(E)}
:=
\sup_{E} |u|
+
\diam(E)^{\beta}
\sup_{ \substack{x, y \in E \\ x \neq y} }
\frac{|u(x) - u(y)|}{|x - y|^{\beta}}.
\]
Conversely, \cite[Theorem 3]{MR1924196} (see also \cite[Lemma 3]{MR856511}) showed
that the boundedness of \eqref{eqn:harmonic_extention} yields \eqref{eqn:CDC} if $\Omega$ has no irregular point.

For \eqref{eqn:DE}, one approach is to directly repeat the above three steps.
This argument is well-recognized as standard in boundary regularity estimates (see, e.g., \cite[Theorem 8.30]{MR1814364}),
and is still considered basic (e.g., \cite{MR4491776, MR4492670, MR4662426}).
However, this approach has certain drawbacks in the step (ii).
Specifically, the fact that the condition \eqref{eqn:CDC} holds at all boundary points is not fully utilized. 
Additionally, since it requires extending the equation out of the domain, this strategy cannot be applied to locally integrable $\bm{b}$, $\mu$, and $\nu$.

An alternative approach to \eqref{eqn:DE} is to construct the Green function of \eqref{eqn:DE0} and regard the lower-order terms as a perturbation.
This approach is often used in the context of potential theory (e.g., \cite{MR0910581, MR1731467, MR1482989, MR1828387, MR2140204}).
The problems in (ii) above do not occur in this method because the Green function is a global concept.
As a result, under smoothness assumptions on $\partial \Omega$,
it is possible to deal with coefficients that diverge by negative powers of $\delta(x) := \dist(x, \partial \Omega)$, as in
\begin{equation}\label{eqn:example_b}
\delta(x)^{1 - \beta} |\bm{b}(x)| \in L^{\infty}(\Omega),
\end{equation}
\begin{equation}\label{eqn:example_mu}
\mu = c(x) m \quad \text{and} \quad \delta(x)^{2 - \beta} c(x) \in L^{\infty}(\Omega),
\end{equation}
where $\beta \in (0, 1)$ and $m$ is the Lebesgue measure.
A major drawback of this approach is that explicit estimates for Green functions are difficult to derive in domains with complex-shaped boundaries.
Notably, Ancona \cite{MR856511} overcame this obstacle by developing a framework that avoids relying on explicit formulas for the Green function altogether.
The key idea in this approach is to rely on global estimates, thereby avoiding the need for explicit expressions of the Green function.

However, the results in \cite{MR856511}  impose strong assumptions on interior regularity and do not provide any explicit conclusions.
In particular, there is still room for improvement, especially in relaxing the assumptions and clarifying the associated norm inequalities.

In the present paper, we use a global H\"{o}lder estimate in \cite{hara2023global} (see, Lemma \ref{lem:poisson} below).
By the argument in \cite[Theorem 3.2]{MR998128}, if $\nu \ge 0$, $u \in C^{\beta}(\cl{\Omega})$, $\bm{b} = 0$, and $\mu = 0$,
then, $\nu$ must satisfy a Morrey-type condition in Lemma \ref{lem:poisson}.
We develop the perturbation argument under conditions that arise naturally from the goal of obtaining H\"{o}lder continuous solutions.

\subsection{Result}

We control $\bm{b}$, $\mu$ and $\nu$ using the following Morrey-type norm and corresponding normed spaces.
The details of them will be discussed in Section \ref{sec:MS}.

\begin{definition}[{\cite{hara2023global}}]\label{def:floated_morrey}
For $0 \le \lambda \le n$, we define
\[
\mathsf{M}^{\lambda}(\Omega)
:=
\left\{
\nu \in \mathcal{M}(\Omega) \colon \trinorm{\nu}_{\lambda, \Omega} < \infty
\right\},
\]
where
\begin{equation}\label{en:def_norm}
\trinorm{\nu}_{\lambda, \Omega}
:=
\sup_{ \substack{x \in \Omega \\ 0 < r < \delta(x) / 2} } r^{- \lambda} |\nu|(B(x, r)).
\end{equation}
\end{definition}

Our main result is as follows.

\begin{theorem}\label{thm:main}
Assume \eqref{eqn:A} and \eqref{eqn:CDC}.
Suppose that
\begin{equation}\label{eqn:b}
|\bm{b}|^{2} m \in \mathsf{M}^{ n - 2 + 2 \beta }(\Omega),
\end{equation}
\begin{equation}\label{eqn:mu}
\mu \in \mathsf{M}^{ n - 2 + \beta }(\Omega),
\end{equation}
where $\beta \in (0, 1)$
and  that $\mu \ge 0$.
Then, for each $\nu \in \mathsf{M}^{ n - 2 + \beta }(\Omega)$ and $g \in C^{\beta}(\partial \Omega)$, there exists
a unique weak solution $u \in H^{1}_{\loc}(\Omega) \cap C(\cl{\Omega})$ to \eqref{eqn:DE}.
Moreover, there exists a positive constant $\beta_{\star}$ depending only on $n$, $L$, $\beta$ and $\gamma$ such that
\begin{equation}\label{eqn:main_esti}
\| u \|_{C^{\beta_{\star}}(\cl{\Omega})}
\le
C 
\left(
\diam(\Omega)^{\beta} \trinorm{ \nu }_{ n - 2 + \beta, \Omega}
+
\| g \|_{C^{\beta}(\partial \Omega)}
\right),
\end{equation}
where $C$ is a positive constant independent of $\nu$ and $g$.
\end{theorem}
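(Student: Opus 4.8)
The plan is to recast \eqref{eqn:DE} as an operator equation $(I - K) w = w_{0}$ on a Banach space adapted to the estimate of \cite{hara2023global}, and to run the Fredholm alternative: compactness of $K$ reduces solvability to injectivity, and injectivity is supplied by a maximum principle that uses $\mu \ge 0$. Throughout, the principal part $L_{0} u := - \divergence(A \nabla u) + \mu u$ plays the role of the invertible ``good'' operator, since $\mu \ge 0$ makes it coercive, while the drift $\bm{b} \cdot \nabla u$ is treated as a perturbation.

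First I would reduce to homogeneous boundary data. Using \eqref{eqn:CDC} and the boundedness of \eqref{eqn:harmonic_extention}, solve \eqref{eqn:DE0} to obtain an $A$-harmonic extension $G \in C^{\beta_{0}}(\cl{\Omega}) \cap H^{1}_{\loc}(\Omega)$ of $g$ with $\| G \|_{C^{\beta_{0}}(\cl{\Omega})} \lesssim \| g \|_{C^{\beta}(\partial \Omega)}$. Then $u$ solves \eqref{eqn:DE} if and only if $w := u - G$ solves the same equation with zero boundary data and right-hand side $\tilde{\nu} := \nu - \bm{b} \cdot \nabla G - \mu G$. I would verify $\tilde{\nu} \in \mathsf{M}^{n / (2 - \beta)}(\Omega)$ with $\trinorm{\tilde{\nu}}_{n/(2-\beta),\Omega} \lesssim \trinorm{\nu}_{n/(2-\beta),\Omega} + \| g \|_{C^{\beta}(\partial \Omega)}$: the term $\mu G$ is controlled by $G \in L^{\infty}$ and \eqref{eqn:mu}, while $\bm{b} \cdot \nabla G$ uses \eqref{eqn:b} together with the Caccioppoli--Morrey bound on $\nabla G$ coming from the H\"{o}lder continuity of $G$, combined through the Cauchy--Schwarz inequality for Morrey norms.

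Next, I would record that, by \cite{hara2023global}, the coercive operator $L_{0}$ with zero boundary data admits a bounded solution operator $S_{0}$ from $\mathsf{M}^{n/(2-\beta)}(\Omega)$ into the space $X := \{ w \in C^{\beta_{\star}}(\cl{\Omega}) \cap H^{1}_{\loc}(\Omega) : w = 0 \text{ on } \partial \Omega \}$ equipped with the H\"{o}lder norm together with a weighted, Morrey-type gradient seminorm; this is where the exponent $\beta_{\star}$ and the estimate \eqref{eqn:main_esti} originate. Writing the full problem as $(I - K) w = S_{0} \tilde{\nu}$ with $K w := S_{0}( - \bm{b} \cdot \nabla w)$, the scaling works in our favour: by Cauchy--Schwarz, $\int_{B} |\bm{b}| |\nabla w| \lesssim ( \int_{B} |\bm{b}|^{2} )^{1/2} ( \int_{B} |\nabla w|^{2} )^{1/2} \lesssim r^{\, n - 2 + \beta + \beta_{\star}}$ for $B = B(x, r)$ with $r < \delta(x)/2$, so that $\bm{b} \cdot \nabla w$ lies in $\mathsf{M}^{n/(2-\beta)}(\Omega)$ with a genuine gain of $r^{\beta_{\star}}$ over the borderline scaling \eqref{eqn:def_norm}. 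Hence $K$ maps $X$ boundedly into itself, and this gain is exactly what I would exploit to prove that $K$ is compact, combining Arzel\`{a}--Ascoli for the H\"{o}lder part (the embedding $C^{\beta_{\star}}(\cl{\Omega}) \hookrightarrow C^{\beta'}(\cl{\Omega})$ is compact for $\beta' < \beta_{\star}$) with a local energy and weak-convergence argument, upgraded to strong convergence, for the gradient part.

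Finally, for uniqueness: if $(I - K) w = 0$, then $L_{0} w + \bm{b} \cdot \nabla w = 0$ with $w = 0$ on $\partial \Omega$, and the weak maximum principle for \eqref{eqn:DE} with $\mu \ge 0$ and subcritical drift forces $w \equiv 0$. The Fredholm alternative then makes $I - K$ invertible on $X$, giving a unique $w$, whence $u = w + G \in H^{1}_{\loc}(\Omega) \cap C(\cl{\Omega})$ solves \eqref{eqn:DE}; the bound \eqref{eqn:main_esti} follows from $\| w \|_{X} \le \| (I - K)^{-1} \| \, \| S_{0} \tilde{\nu} \|_{X} \lesssim \trinorm{\nu}_{n/(2-\beta),\Omega} + \| g \|_{C^{\beta}(\partial \Omega)}$, after taking $\beta_{\star}$ to be the smaller of the two H\"{o}lder exponents produced above. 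The hard part will be the compactness of $K$ up to the boundary, where $\bm{b}$ may blow up like a negative power of $\delta$ and $w$ is only locally in $H^{1}$: turning the subcritical gain $r^{\beta_{\star}}$ into precompactness in the combined H\"{o}lder/weighted-gradient norm --- rather than merely in a H\"{o}lder norm --- is the delicate core of the argument, and it is there that the precise Morrey exponents $n/(2 - 2\beta)$ and $n/(2 - \beta)$ must be used in full.
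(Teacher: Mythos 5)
Your overall strategy --- reduce to zero boundary data via the $A$-harmonic extension of $g$, treat the lower-order terms as a compact perturbation of the solution operator coming from \cite{hara2023global}, invoke the Fredholm alternative, and get injectivity from a maximum principle using $\mu \ge 0$ --- is exactly the architecture of the paper. But there are two concrete gaps. First, you take the ``good'' operator to be $L_{0} u = - \divergence(A \nabla u) + \mu u$ and assert that \cite{hara2023global} supplies a bounded solution operator $S_{0}$ for it. It does not: the quoted result (Lemma \ref{lem:poisson}) gives $\mathbf{G}_{0}$ only for $-\divergence(A\nabla \cdot)$, and for a measure $\mu$ that is merely in $\mathsf{M}^{n/(2-\beta)}(\Omega)$ (possibly blowing up like $\delta(x)^{\beta-2}$) the global H\"{o}lder estimate for $L_{0}$ is not off the shelf --- obtaining it is essentially the theorem you are trying to prove with $\bm{b}=0$. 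The paper avoids this by putting \emph{both} $\bm{b}\cdot\nabla$ and $\mu$ into the perturbation, i.e.\ $T\nu = -(\bm{b}\cdot\nabla + \mu)\mathbf{G}_{0}\nu$; note that $\mu\ge 0$ is then used only at the very end, to rule out the first Fredholm alternative via the strong maximum principle (Proposition \ref{prop:positive_mu}, proved through the weak Harnack inequality --- your one-line appeal to ``the weak maximum principle'' also needs this care, since $u$ has no global energy or trace and $\bm{b}$ may be singular at $\partial\Omega$).

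Second, and more seriously, you acknowledge but do not close the compactness step. The ``gain of $r^{\beta_{\star}}$'' you compute does not by itself yield compactness: inclusions between Morrey scales are bounded but not compact, so improving the exponent in \eqref{eqn:def_norm} is not enough, and compactness of $K$ on a space carrying a weighted gradient seminorm is precisely the part you defer. The paper's resolution is to place the Fredholm operator on the \emph{measure} space $\mathsf{M}^{n/(2-\beta)}(\Omega)$ rather than on a function space: if $\trinorm{\nu_{j}}_{q,\Omega}\le M$ then $u_{j}=\mathbf{G}_{0}\nu_{j}$ is bounded in $C^{\beta_{1}}(\cl{\Omega})$, hence (Ascoli--Arzel\`{a}) uniformly convergent along a subsequence, and then the Caccioppoli inequality applied to the \emph{difference} $u_{j}-u_{i}$ (which solves an equation with data bounded in $\mathsf{M}^{q}$) converts uniform smallness of $u_{j}-u_{i}$ into smallness of $\int_{B(x,r)}|\nabla(u_{j}-u_{i})|^{2}\,dx$ at the scale $r^{n-2}$, whence $(\bm{b}\cdot\nabla u_{j})m$ and $\mu u_{j}$ are Cauchy in $\mathsf{M}^{q}(\Omega)$, which is complete (Theorem \ref{thm:completeness}). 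This interpolation-via-Caccioppoli trick is the missing ingredient in your sketch; with it, and with $\mu$ moved into the perturbation, your argument becomes the paper's proof.
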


\begin{remark}
The solution $u$ in Theorem \ref{thm:main} may not have finite energy.
Note that we do not assume that $\nu \in H^{-1}(\Omega)$ or that $g$ is the trace of an $H^{1}(\Omega)$ function.

There is no size restriction of the norm of $\bm{b}$ in Theorem \ref{thm:main}.
This existence result holds even if the problem is not coercive in the sense of bilinear form on $H_{0}^{1}(\Omega)$.
\end{remark}

The classes of $\bm{b}$, $\mu$ and $\nu$ in Theorem \ref{thm:main} are larger than in \cite[Theorems 8.29 and 8.30]{MR1814364}.
As already mentioned, \eqref{eqn:example_b} and \eqref{eqn:example_mu} provide examples in which differences between the two appear.

Theorem \ref{thm:main} is not a generalization of the Wiener criterion (e.g., \cite[Theorem 8.31]{MR1814364} and \cite[Section 4.2]{MR1461542}).
The uniform condition \eqref{eqn:CDC} is necessary for the norm estimate \eqref{eqn:main_esti},
but stronger than the divergence of the Wiener integral at each boundary point.
Theorem \ref{thm:main} utilizes this stronger assumption effectively.

We note here the limitations of Theorem \ref{thm:main}.
First, we have no information of the optimal value of $\beta_{\star}$.
Second, we do not know sharp conditions for existence of globally continuous solutions.
Finally, cancellation of the coefficients which has been used in recent studies on interior regularity estimates
(e.g., \cite{MR2760150, MR2852216, MR3334220, MR3455216}), are not utilized.
These are topics for future work.

\subsection*{Organization of the paper}
In Section \ref{sec:MS}, we discuss properties of $\mathsf{M}^{\lambda}(\Omega)$.
In Section \ref{sec:perturbation}, we show compactness of lower-order perturbations and
prove Theorem \ref{thm:main} for the homogeneous boundary data $g = 0$.
In Section \ref{sec:BVP}, we complete the proof of Theorem \ref{thm:main}.

\subsection*{Notation}
Throughout the paper, $\Omega \subsetneq \R^{n}$ is a bounded open set.
We denote by $\delta(x)$ the distance from the boundary $\Omega$.
\begin{itemize}
\item
$C_{c}(\Omega) :=$
the set of all continuous functions with compact support in $\Omega$.
\item
$C_{c}^{\infty}(\Omega) := C_{c}(\Omega) \cap C^{\infty}(\Omega)$.
\end{itemize}
We denote by $\mathcal{M}(\Omega)$ the set of all measures on $\Omega$ in the sense in \cite{MR2018901}.
Using the Riesz representation theorem, we identify them with continuous linear functional functionals on $C_{c}(\Omega)$.
When the Lebesgue measure must be indicate clearly, we use the letter $m$.
For a function $u$ on $B$, we use the notation $\fint_{B} u \,dx := m(B)^{-1} \int_{B} u \, dx$.
The letter $C$ denotes various constants.

\section{Morrey spaces and elliptic regularity}\label{sec:MS}

We first consider properties of the normed space $\mathsf{M}^{\lambda}(\Omega)$ in Definition \ref{def:floated_morrey}.

Let us recall some facts of traditional Morrey spaces.
Since the introduction by Morrey \cite{MR1501936},
Morrey spaces have been represented in various notations (e.g., \cite{MR1354887, MR1461542, MR3467116, MR4288177}).
In \cite[p.29]{MR3467116}, for a fixed exponent $0 \le \lambda \le n$,
the set of all signed Radon measures on $\R^{n}$ satisfying
\[
|\mu|(B(x, r)) \le C r^{\lambda} \quad \forall x \in \R^{n}, \ 0 < \forall r < \infty
\]
is denoted by $L^{1, \lambda}$, where $C$ is a constant independent of $x$ and $r$.
If $f$ belongs to the Lebesgue space $L^{q}(\R^{n})$, then $f m \in L^{1, n - n / q}$.
For a finite signed measure on a bounded domain, its zero extension to $\R^{n}$ is considered.

Our space $\mathsf{M}^{\lambda}(\Omega)$ differs from the above one in that the range of $r$ is restricted by $\delta(x) / 2$.
When this is not significant, the same arguments as in the aforementioned literature apply.
For instance, if $f \in L^{q}(\Omega)$, then, $f m \in \mathsf{M}^{n - n / q}(\Omega)$.
However, this restriction may lead to differences:
$c(x)$ in \eqref{eqn:example_mu} may not be integrable on $\Omega$,
but a simple calculation we can check that $c m \in \mathsf{M}^{n - 2 + \beta }(\Omega)$
(\cite[Proposition 6.1]{hara2023global}).

Since $\Omega$ is bounded, for any $0 \le \lambda_{1} \le \lambda_{2} \le n$,
we have 
\begin{equation}\label{eqn:esti_n/2}
\trinorm{\nu}_{ \lambda_{1}, \Omega} \le \diam(\Omega)^{ \lambda_{2} - \lambda_{1} } \trinorm{\nu}_{ \lambda_{2}, \Omega}.
\end{equation}

\begin{theorem}\label{thm:completeness}
The normed space $\left( \mathsf{M}^{\lambda}(\Omega), \trinorm{\cdot}_{\lambda, \Omega} \right)$ is a Banach space.
\end{theorem}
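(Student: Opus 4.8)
The plan is to verify the norm axioms first and then establish completeness by transferring a Cauchy sequence to local total variation convergence, building the limit measure through an exhaustion of $\Omega$, and finally checking convergence in the Morrey norm.

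I begin by confirming that $\trinorm{\cdot}_{q, \Omega}$ is a norm on $\mathsf{M}^{q}(\Omega)$. Absolute homogeneity and the triangle inequality are inherited directly from the total variation, using $|\lambda \nu| = |\lambda|\,|\nu|$ and the subadditivity $|\nu_{1} + \nu_{2}| \le |\nu_{1}| + |\nu_{2}|$ as measures. For positive definiteness, suppose $\trinorm{\nu}_{q, \Omega} = 0$. Then $|\nu|(B(x, r)) = 0$ for every admissible ball with $0 < r < \delta(x)/2$; since every compact set $K \subset \Omega$ is covered by finitely many such balls, we get $|\nu|(K) = 0$ for all compact $K$, whence $\nu = 0$ as a Radon measure.

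For completeness, let $\{\nu_{k}\}$ be Cauchy in $\trinorm{\cdot}_{q, \Omega}$. The key reduction is a local comparison estimate: given a compact $K \subset \Omega$, cover it by finitely many admissible balls $B(x_{i}, r_{i})$ to produce a constant $C(K)$ with
\[
|\nu_{j} - \nu_{k}|(K) \le C(K)\, \trinorm{\nu_{j} - \nu_{k}}_{q, \Omega}.
\]
Hence the restrictions $\nu_{k}\!\mid_{K}$ form a Cauchy sequence in the total variation norm, and since the finite signed Borel measures on the compact metric space $K$ constitute a Banach space (identifiable with the dual of $C(K)$), there is a limit $\mu_{K}$ with $|\nu_{k} - \mu_{K}|(K) \to 0$. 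Taking an exhaustion $K_{1} \subset K_{2} \subset \cdots$ with $\bigcup_{m} K_{m} = \Omega$, uniqueness of these limits forces the consistency $\mu_{K_{m+1}}\!\mid_{K_{m}} = \mu_{K_{m}}$, so the $\mu_{K_{m}}$ glue into a single Radon measure $\nu$ on $\Omega$; moreover $|\nu_{k} - \nu|(B) \to 0$ for every admissible ball $B$.

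It remains to show $\nu \in \mathsf{M}^{q}(\Omega)$ and $\trinorm{\nu_{k} - \nu}_{q, \Omega} \to 0$. Since Cauchy sequences are bounded, fix $M$ with $\trinorm{\nu_{k}}_{q, \Omega} \le M$ for all $k$. For an admissible ball, the reverse triangle inequality $\bigl| |\nu_{k}|(B) - |\nu|(B) \bigr| \le |\nu_{k} - \nu|(B)$ gives $|\nu_{k}|(B(x,r)) \to |\nu|(B(x,r))$, so passing to the limit in $r^{n/q - n} |\nu_{k}|(B(x,r)) \le \diam(\Omega)^{-(2 - n/q)} M$ and taking the supremum yields $\trinorm{\nu}_{q, \Omega} \le M$. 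For the norm convergence, fix $j$ and let $k \to \infty$ in $r^{n/q - n} |\nu_{j} - \nu_{k}|(B(x,r)) \le \diam(\Omega)^{-(2 - n/q)} \trinorm{\nu_{j} - \nu_{k}}_{q, \Omega}$ to obtain
\[
\trinorm{\nu_{j} - \nu}_{q, \Omega} \le \limsup_{k \to \infty} \trinorm{\nu_{j} - \nu_{k}}_{q, \Omega},
\]
which tends to $0$ as $j \to \infty$ by the Cauchy property. The main obstacle is the gluing step: one must ensure the locally defined total variation limits are mutually consistent and, crucially, that convergence of $\nu_{k}$ to $\nu$ in total variation on compact sets transfers to convergence of the total variation measures $|\nu_{k} - \nu|$ evaluated on admissible balls. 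Handling signed and merely locally finite Radon measures, rather than finite nonnegative ones, is where the care is required.
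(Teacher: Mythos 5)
Your proof is correct, and it takes a genuinely different route from the paper's. The paper produces the limit measure by a weak-compactness argument: it uses boundedness of the Cauchy sequence to extract a subsequence converging vaguely (as functionals on $C_{c}(\Omega)$) to some $\mu \in \mathcal{M}(\Omega)$, and then upgrades the Cauchy estimate by testing against functions $\varphi \in C_{c}(B)$ with $\|\varphi\|_{\infty} \le 1$ and passing to the limit along that subsequence, recovering $|\mu_{j} - \mu|(B)$ as a supremum over such $\varphi$. You instead observe that the Morrey--Cauchy condition already forces the restrictions to each compact $K \subset \Omega$ (covered by finitely many admissible balls) to be Cauchy in total variation, so the limit exists \emph{strongly} on each compact and glues along an exhaustion; the final estimate then follows from the elementary inequality $\bigl| |\nu_{j} - \nu_{k}|(B) - |\nu_{j} - \nu|(B) \bigr| \le |\nu_{k} - \nu|(B)$ rather than from duality. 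Your route avoids both the subsequence extraction and the identification of $|\mu|(B)$ via test functions, and it exhibits the stronger structural fact that Morrey convergence dominates local total variation convergence; the paper's route is more robust in that it only needs boundedness (not the Cauchy property) to manufacture the candidate limit, at the cost of then having to show the whole sequence converges. Both arguments are complete; the points you flag as delicate (consistency of the local limits and the transfer of total variation convergence to admissible balls) are handled correctly, since restriction is norm-decreasing for total variation and every admissible ball has compact closure in $\Omega$.
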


\begin{proof}
Let $\{ \mu_{j} \}$ be a Cauchy sequence in $\mathsf{M}^{ \lambda }(\Omega)$.
Then, for any $\epsilon > 0$, there exists $j_{\epsilon}$ such that
\[
|\mu_{j} - \mu_{i}|(B) \le \epsilon \, \diam(B)^{\lambda}
\]
whenever $j, i \ge j_{\epsilon}$ and $2B \subset \Omega$.
Then, we have
\[
\left| \int_{\Omega} \varphi \, d (\mu_{j} - \mu_{i}) \right|
\le
\epsilon \, \diam(B)^{\lambda}
\]
for all
\begin{equation}\label{eqn:cond_test-func}
\varphi \in C_{c}(B), \quad \| \varphi \|_{L^{\infty}(\Omega)} \le 1.
\end{equation}
If $K \subset \Omega$ is compact, then, we can choose finitely many balls $\{ B_{k} \}$ such that $2B_{k} \subset \Omega$ and $K \subset \bigcup_{k} B_{k}$.
Using \eqref{eqn:cond_test-func} and a partition of unity,
we find that $\{ \mu_{j} \}$ is bounded in the sense of the dual of $C_{c}(\Omega)$.
Therefore, there exists a subsequence $\{ \mu_{j_{k}} \}$ of $\{ \mu_{j} \}$ and $\mu \in \mathcal{M}(\Omega)$ such that $\mu_{j_{k}}$ converges to $\mu$ vaguely.

Fix a ball $B$ and $\varphi$ satisfying \eqref{eqn:cond_test-func} again.
Taking the limit $i \to \infty$ along the above subsequence, we obtain
\begin{equation}\label{eqn:completeness01}
\left| \int_{\Omega} \varphi \, d (\mu_{j} - \mu) \right|
\le
\epsilon \, \diam(B)^{\lambda}
\end{equation}
and
\[
\left| \int_{\Omega} \varphi \, d \mu \right|
\le
( \trinorm{ \mu_{j} }_{\lambda, \Omega} + \epsilon) \diam(B)^{\lambda}.
\]
It follows from assumption on $\varphi$ that
\[
| \mu |(B)
\le
( \trinorm{ \mu_{j} }_{\lambda, \Omega} + \epsilon) \diam(B)^{\lambda}.
\]
Therefore, $\mu \in \mathsf{M}^{ \lambda }(\Omega)$.
Using \eqref{eqn:completeness01} again, we obtain
\[
\trinorm{ \mu_{j} - \mu }_{\lambda, \Omega} \le \epsilon.
\]
Consequently, $\mu_{j} \to \mu$ in $\mathsf{M}^{ \lambda }(\Omega)$.
The uniqueness of $\mu$ and the convergence of the whole sequence follows from the usual manner.
\end{proof}

Next, we define weak solutions to \eqref{eqn:DE}.

\begin{definition}\label{def:weak-sol}
Let $\bm{b} \in L^{2}_{\loc}(\Omega)$, and let $\mu, \nu \in \mathcal{M}(\Omega)$.
We say that a function $u \in H^{1}_{\loc}(\Omega) \cap C(\Omega)$ is a weak solution to \eqref{eqn:DE} if
\[
\int_{\Omega} A \nabla u \cdot \nabla \varphi + \bm{b} \cdot \nabla u \varphi \, dx
+
\int_{\Omega} u \varphi \, d \mu
=
\int_{\Omega} \varphi \, d \nu
\]
for all $\varphi \in C_{c}^{\infty}(\Omega)$.
\end{definition}

Throughout the paper, we understand \eqref{eqn:DE} in the sense of Definition \ref{def:weak-sol}.

The following weak Harnack inequality can be found in e.g., \cite[Theorem 3.13]{MR1461542}. 

\begin{lemma}\label{lem:weak_harnack}
Suppose that \eqref{eqn:A}, \eqref{eqn:b} and \eqref{eqn:mu} hold for some $\beta \in (0, 1)$.
Let $u$ be a nonnegative weak supersolution to $- \divergence(A \nabla u) + \bm{b} \cdot \nabla u + \mu u = 0$ in $\Omega$.
Assume that $B(x, 2r) \subset \Omega$.
Then, we have
\[
\fint_{B(x, r)} u \, dx \le C \inf_{B(x, r)} u,
\]
where $C$ is a positive constant depending only on $n$, $L$, $\beta$,
$\trinorm{ |\bm{b}|^{2} m}_{ n - 2 + 2 \beta, \Omega}$ and $\trinorm{\mu}_{ n - 2 + \beta, \Omega}$.
\end{lemma}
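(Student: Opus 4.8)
The natural route is the De Giorgi--Nash--Moser iteration, arranged so that the lower-order coefficients---which lie only in the Morrey classes \eqref{eqn:b} and \eqref{eqn:mu}---enter as \emph{subcritical} perturbations. The plan is first to reduce to a strictly positive function by setting $\bar{u} := u + \epsilon$ for $\epsilon > 0$ and letting $\epsilon \downarrow 0$ at the very end, carrying the zeroth-order term as a perturbation controlled by $\trinorm{\mu}_{n / (2 - \beta), \Omega}$ so that no sign on $\mu$ is needed and the shift contributes only the harmless datum $\epsilon \mu$. For an exponent $\tau \neq 0$ and a cutoff $\eta \in C_{c}^{\infty}(B(x, 2r))$, I would then test the supersolution inequality with $\varphi = \eta^{2} \bar{u}^{2 \tau - 1}$; by \eqref{eqn:A} and the product rule this yields an energy inequality of the schematic form
\[
\int \eta^{2} |\nabla \bar{u}^{\tau}|^{2} \, dx
\lesssim
\tau^{2} \int |\nabla \eta|^{2} \bar{u}^{2 \tau} \, dx
+
\int |\bm{b}| \, \eta^{2} |\nabla \bar{u}^{\tau}| \, \bar{u}^{\tau} \, dx
+
\int \eta^{2} \bar{u}^{2 \tau} \, d |\mu|,
\]
with every constant tracked explicitly in $\tau$.

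The two perturbation integrals are the heart of the matter and are controlled by \eqref{eqn:b} and \eqref{eqn:mu} through a Morrey--Sobolev absorption inequality: for $f \in H^{1}_{0}(B(x, 2r))$, a measure $\nu \in \mathsf{M}^{n / (2 - \beta)}(\Omega)$, and any $\theta \in (0, 1)$,
\[
\int f^{2} \, d |\nu|
\le
C \trinorm{\nu}_{n / (2 - \beta), \Omega}
\left(
\theta \int |\nabla f|^{2} \, dx
+
\theta^{- \kappa} r^{-2} \int f^{2} \, dx
\right),
\]
which follows from the scaling $|\nu|(B_{\rho}) \lesssim \rho^{n - 2 + \beta}$ and the Sobolev inequality; the $r^{\beta}$ gain hidden here is exactly what makes the perturbation subcritical. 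Applying this with $f = \eta \bar{u}^{\tau}$---directly for the $\mu$-integral and, after Young's inequality, through the analogous bound for $|\bm{b}|^{2} m \in \mathsf{M}^{n / (2 - 2 \beta)}(\Omega)$ for the $\bm{b}$-integral---lets me absorb the gradient contributions into the left-hand side and reach the clean estimate
\[
\int \eta^{2} |\nabla \bar{u}^{\tau}|^{2} \, dx
\lesssim
\tau^{2} \int \left( |\nabla \eta|^{2} + r^{-2} \eta^{2} \right) \bar{u}^{2 \tau} \, dx.
\]

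Combining this with the Sobolev inequality $\| \eta \bar{u}^{\tau} \|_{L^{2^{*}}}^{2} \lesssim \| \nabla (\eta \bar{u}^{\tau}) \|_{L^{2}}^{2}$ (with $2^{*} = 2 n / (n - 2)$, and any large exponent replacing $2^{*}$ when $n = 2$) yields a self-improving reverse-H\"{o}lder inequality between the $L^{2^{*} \tau}$ and $L^{2 \tau}$ averages of $\bar{u}$ on nested balls. Iterating over a geometric sequence of radii and exponents gives both the negative-power bound $\essinf_{B(x, r)} \bar{u} \ge c ( \fint_{B(x, 2r)} \bar{u}^{- p_{0}} )^{- 1 / p_{0}}$ and the positive-power comparison. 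To cross from small negative to small positive powers I would test with $\varphi = \eta^{2} \bar{u}^{-1}$ to obtain a Caccioppoli estimate for $\log \bar{u}$, conclude that $\log \bar{u} \in \mathrm{BMO}$ on the relevant balls, and invoke the John--Nirenberg inequality to get $( \fint \bar{u}^{p_{0}} ) ( \fint \bar{u}^{- p_{0}} ) \le C$ for some fixed $p_{0} > 0$. Chaining these estimates produces $( \fint_{B(x, 2r)} \bar{u}^{p} )^{1 / p} \le C \essinf_{B(x, r)} \bar{u}$ for every $0 < p < n / (n - 2)$; taking $p = 1$, which lies in this range for all $n \ge 2$, and letting $\epsilon \downarrow 0$ gives $\fint_{B(x, 2r)} u \le C \essinf_{B(x, r)} u$, and the stated inequality follows from $\fint_{B(x, r)} u \le 2^{n} \fint_{B(x, 2r)} u$.

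The principal obstacle is the absorption step: with only the Morrey bounds \eqref{eqn:b} and \eqref{eqn:mu} at hand, one must check not merely that the perturbations are subcritical but---more delicately---that the absorption constants stay controlled \emph{uniformly as the Moser exponent $\tau$ grows}, so that the reverse-H\"{o}lder constants accumulate to a finite product over the iteration. The Morrey exponents tied to $\beta$ are calibrated precisely to supply the $r^{\beta}$ gain that makes this possible.
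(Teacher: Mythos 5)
The paper does not prove this lemma at all: it is quoted as a known result with a pointer to \cite[Theorem 3.13]{MR1461542}, where it is established by exactly the Moser-iteration scheme you describe (Caccioppoli estimates for powers of $u+\epsilon$, absorption of the Morrey-class lower-order terms via their form-boundedness with small form bound, the logarithmic/John--Nirenberg crossover, and the final limit $\epsilon\downarrow 0$). Your outline is a correct reconstruction of that standard argument, including the key point that the $r^{\beta}$ gain from \eqref{eqn:b} and \eqref{eqn:mu} keeps the absorption constants under control uniformly in the iteration exponent.
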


\begin{proposition}\label{prop:positive_mu}
Suppose that \eqref{eqn:A}, \eqref{eqn:b} and \eqref{eqn:mu} hold for some $\beta \in (0, 1)$.
Assume further that $\mu \ge 0$.
Let $u \in H^{1}_{\loc}(\Omega) \cap C(\cl{\Omega})$ be a weak solution to
\begin{equation}\label{eqn:EV}
\begin{cases}
- \divergence (A \nabla u) + \bm{b} \cdot \nabla u + \mu u = 0 & \text{in} \ \Omega,
\\
u = 0 & \text{on} \ \partial \Omega.
\end{cases}
\end{equation}
Then, $u = 0$.
\end{proposition}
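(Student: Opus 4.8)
The plan is to establish a maximum principle. It suffices to prove that $\max_{\cl{\Omega}} u \le 0$: since the operator in \eqref{eqn:EV} is linear, $-u$ also solves \eqref{eqn:EV} with zero boundary data, so the same conclusion applied to $-u$ gives $\min_{\cl{\Omega}} u \ge 0$, and hence $u \equiv 0$. Because $u \in C(\cl{\Omega})$ and $\cl{\Omega}$ is compact, the value $M := \max_{\cl{\Omega}} u$ is attained. I argue by contradiction and assume $M > 0$.

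The key structural step is to freeze the constant $M$ and reduce to the strong maximum principle via Lemma \ref{lem:weak_harnack}. Set $v := M - u$, so that $v \ge 0$ and $v \in H^{1}_{\loc}(\Omega) \cap C(\cl{\Omega})$ with $\nabla v = -\nabla u$. Testing the weak formulation of \eqref{eqn:EV} against $\varphi \in C_{c}^{\infty}(\Omega)$ with $\varphi \ge 0$ and using that a constant contributes only through the zeroth-order term, I would obtain
\[
\int_{\Omega} A \nabla v \cdot \nabla \varphi + \bm{b} \cdot \nabla v \, \varphi \, dx + \int_{\Omega} v \varphi \, d\mu = M \int_{\Omega} \varphi \, d\mu \ge 0,
\]
where the inequality uses $M > 0$ and $\mu \ge 0$. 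Thus $v$ is a nonnegative weak supersolution to $-\divergence(A \nabla v) + \bm{b} \cdot \nabla v + \mu v = 0$, and Lemma \ref{lem:weak_harnack} applies to $v$ (its constant is finite by \eqref{eqn:A}, \eqref{eqn:b}, \eqref{eqn:mu}).

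Next I would run the standard propagation argument. Since $M > 0$ while $u = 0$ on $\partial \Omega$, the maximum is not attained on the boundary, so there is an interior point $x_{0} \in \Omega$ with $u(x_{0}) = M$, i.e. $v(x_{0}) = 0$. For any ball with $B(x_{0}, 2r) \subset \Omega$, the weak Harnack inequality gives $\fint_{B(x_{0}, r)} v \, dx \le C \inf_{B(x_{0}, r)} v \le C\, v(x_{0}) = 0$; as $v \ge 0$ is continuous, $v \equiv 0$ on $B(x_{0}, r)$. Hence the zero set of $v$ is open in the connected component $\Omega'$ of $\Omega$ containing $x_{0}$; it is also closed by continuity and nonempty, so $v \equiv 0$, that is $u \equiv M$, on $\Omega'$. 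Since $\Omega'$ is a component of the open set $\Omega$ one has $\partial \Omega' \subset \partial \Omega$, and $\partial \Omega' \ne \emptyset$ because $\Omega'$ is bounded; choosing $y \in \partial \Omega'$ and using continuity up to the boundary together with $u = 0$ on $\partial \Omega$ yields $M = u(y) = 0$, contradicting $M > 0$. Therefore $M \le 0$, and the same reasoning for $-u$ completes the proof.

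The computation and the Harnack-to-strong-maximum-principle step are routine; the point that needs the most care is the \emph{sign} observation that $\mu \ge 0$ is precisely what turns $M - u$ into a supersolution, since the constant contributes the nonnegative term $M \int \varphi \, d\mu$ — without $\mu \ge 0$ this reduction fails and Lemma \ref{lem:weak_harnack} could not be invoked. A secondary subtlety is the bookkeeping with connected components and the inclusion $\partial \Omega' \subset \partial \Omega$, which is exactly what lets the interior identity $u \equiv M$ come into contact with the vanishing boundary data.
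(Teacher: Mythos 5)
Your proof is correct and follows essentially the same route as the paper: both observe that $\mu \ge 0$ makes $M-u$ a nonnegative supersolution and then upgrade the weak Harnack inequality of Lemma \ref{lem:weak_harnack} to a strong maximum principle. The only cosmetic difference is the propagation step --- you run the open-and-closed argument on a connected component and use $\partial \Omega' \subset \partial \Omega$, while the paper picks a point of the level set $\{u = M\}$ at minimal distance to $\partial\Omega$ and derives a contradiction; both are standard and equivalent here.
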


\begin{proof}
This follows from the strong maximum principle.
Let $M = \sup_{\Omega} u \ge 0$.
We note that
\[
- \divergence (A \nabla (M - u)) + \bm{b} \cdot \nabla (M - u) + \mu (M - u) = M \mu \ge 0 \quad \text{in} \ \Omega.
\]
Assume that $M > 0$, and consider the set $E := \{ x \in \Omega \colon u(x) = M \}$.
Take $x \in E$ such that $\delta(x) = \dist(E, \partial \Omega) > 0$.
By Lemma \ref{lem:weak_harnack}, we have
\[
\fint_{B(x, \delta(x / 2))} (M - u) \, dx \le C \inf_{B(x, \delta(x) / 2)} (M - u) = 0.
\]
Since $B(x, \delta(x) / 2) \subset E$,
it follows from an elementary geometrical consideration that $\dist(E, \partial \Omega) \le \delta(x) / 2$.
This contradicts to the definition of $x$. Therefore, $M = 0$.
By the same way, $\inf_{\Omega} u = 0$.
\end{proof}

For $\bm{b} = \bm{0}$, $\mu = 0$ and $g = 0$, the following existence theorem holds.

\begin{lemma}[{\cite{hara2023global}}]\label{lem:poisson}
Assume that \eqref{eqn:A} and \eqref{eqn:CDC} hold.
Suppose that $\nu \in \mathsf{M}^{ n - 2 + \beta }(\Omega)$ for some $\beta \in (0, 1)$.
Then, there exists a unique weak solution
$u \in H^{1}_{\loc}(\Omega) \cap C(\cl{\Omega})$ to
\begin{equation}\label{eqn:poisson}
\begin{cases}
- \divergence(A \nabla u)  = \nu & \text{in} \ \Omega, \\
u = 0 & \text{on} \ \partial \Omega.
\end{cases}
\end{equation}
Moreover, there exist positive constants $C_{1}$ and $\beta_{1}$ depending only on $n$, $L$, $\beta$ and $\gamma$ such that
\begin{equation}\label{eqn:hoelder_esti}
\| u \|_{C^{\beta_{1}}(\Omega)}
\le
C_{1} \, \diam(\Omega)^{ \beta } \trinorm{ \nu }_{ n - 2 + \beta, \Omega}.
\end{equation}
\end{lemma}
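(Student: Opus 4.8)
The plan is to prove the a priori H\"older estimate \eqref{eqn:hoelder_esti} first for finite-energy solutions, then to pass to a general $\nu \in \mathsf{M}^{q}(\Omega)$ by approximation, and finally to read off uniqueness from Proposition \ref{prop:positive_mu}. Uniqueness is immediate: if $u_{1}, u_{2}$ both solve \eqref{eqn:poisson}, then $w := u_{1} - u_{2} \in H^{1}_{\loc}(\Omega) \cap C(\cl{\Omega})$ solves the homogeneous problem \eqref{eqn:EV} with $\bm{b} = \bm{0}$ and $\mu = 0$; since the zero coefficients satisfy \eqref{eqn:b}, \eqref{eqn:mu} and $0 \ge 0$ trivially, Proposition \ref{prop:positive_mu} forces $w = 0$. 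Thus the entire difficulty lies in existence together with the quantitative bound \eqref{eqn:hoelder_esti}.

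The estimate \eqref{eqn:hoelder_esti} itself I would obtain from an oscillation-decay dichotomy. In the interior, for $B(x_{0}, 2r) \subset \Omega$, the De Giorgi--Nash--Moser theory for $-\divergence(A \nabla u) = \nu$ with subcritical Morrey data ($q > n/2$) yields a geometric decay
\[
\osc_{B(x_{0}, \theta r)} u \le \tfrac{1}{2} \osc_{B(x_{0}, r)} u + C r^{\beta_{1}} \trinorm{\nu}_{q, \Omega},
\]
the hypothesis $q > n/2$ being exactly what makes the Riesz-potential contribution of $\nu$ H\"older-small at scale $r$. At a boundary point $\xi \in \partial \Omega$ I would use \eqref{eqn:CDC}: since $u = 0$ on $\cl{B(\xi, r)} \setminus \Omega$ and this set carries at least a fixed fraction $\gamma$ of the relative capacity of the ball, a capacitary De Giorgi (Maz'ya-type) iteration propagates the same geometric decay to $\osc_{B(\xi, r) \cap \Omega} u$, with constants depending only on $n$, $L$, $q$ and $\gamma$. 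Combining the interior and boundary decays produces a uniform H\"older modulus of continuity on $\cl{\Omega}$, and hence \eqref{eqn:hoelder_esti} with $\beta_{1} = \beta_{1}(n, L, q, \gamma)$.

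Granting the a priori estimate, existence follows by approximation. First I would approximate $\nu$ by $\nu_{k}$, obtained by restricting $\nu$ to $\{ \delta > 1/k \}$ and mollifying at a scale much smaller than $1/k$; these lie in $L^{\infty}_{c}(\Omega) \subset H^{-1}(\Omega)$ and, because the norm \eqref{eqn:def_norm} only tests balls with $r < \delta(x)/2$, satisfy $\trinorm{\nu_{k}}_{q, \Omega} \le C \trinorm{\nu}_{q, \Omega}$ uniformly in $k$. For each $k$, Lax--Milgram gives a finite-energy solution $u_{k} \in H^{1}_{0}(\Omega)$, continuous up to $\cl{\Omega}$ with $u_{k} = 0$ on $\partial \Omega$ by virtue of \eqref{eqn:CDC}. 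The a priori estimate bounds $\| u_{k} \|_{C^{\beta_{1}}(\cl{\Omega})}$ uniformly, so Arzel\`a--Ascoli extracts a subsequence converging uniformly to some $u \in C^{\beta_{1}}(\cl{\Omega})$ with $u = 0$ on $\partial \Omega$ and satisfying \eqref{eqn:hoelder_esti}; Caccioppoli estimates give uniform $H^{1}_{\loc}$ bounds, so $u_{k} \wkto u$ in $H^{1}_{\loc}(\Omega)$, and passing to the limit in the weak formulation shows that $u$ solves \eqref{eqn:poisson}.

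The hard part will be the boundary oscillation-decay step. Turning the capacity lower bound \eqref{eqn:CDC} into a quantitative decay requires a capacitary De Giorgi iteration that simultaneously absorbs the Morrey contribution of $\nu$ near $\partial \Omega$, and the delicate point is to keep every constant depending only on $n$, $L$, $q$ and $\gamma$, with no reference to the regularity of $\partial \Omega$. A secondary, purely technical, obstacle is checking that the approximating measures $\nu_{k}$ do not inflate the Morrey norm, which hinges on the localized form of \eqref{eqn:def_norm}.
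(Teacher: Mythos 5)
The paper offers no proof of this lemma: it is imported verbatim from \cite[Theorem 1.3]{hara2023global}, and in fact the introduction explicitly discusses why the strategy you propose is \emph{not} the one used there. So the comparison here is between your sketch and the route the paper says one must take.

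Your uniqueness argument (reduce to \eqref{eqn:EV} with $\bm{b} = \bm{0}$, $\mu = 0$ and invoke Proposition \ref{prop:positive_mu}) and your approximation scheme (truncate $\nu$ away from $\partial \Omega$, mollify, solve by Lax--Milgram, pass to the limit with Arzel\`a--Ascoli and Caccioppoli) are sound, modulo routine care with the covering argument showing $\trinorm{\nu_{k}}_{q, \Omega} \le C \trinorm{\nu}_{q, \Omega}$. The genuine gap is in the step you yourself flag as the hard one, and it is not merely technical: the boundary oscillation-decay you propose cannot work as stated. Any De Giorgi/Maz'ya-type iteration at a boundary point $\xi \in \partial \Omega$ needs to absorb the inhomogeneous term through a quantity like $r^{2 - n} |\nu|(B(\xi, r) \cap \Omega)$ or the corresponding Riesz/Wolff potential, but the norm \eqref{eqn:def_norm} only tests balls with $r < \delta(x)/2$ and gives \emph{no} control on the mass of $\nu$ in boundary balls. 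A Whitney decomposition of $B(\xi, R) \cap \Omega$ shows that the best available bound is $\sum_{j} 2^{j(n-1)} (2^{-j} R)^{n - n/q} = R^{n - n/q} \sum_{j} 2^{j(n/q - 1)}$, which diverges for every $q \le n$, whereas the lemma only assumes $q > n/2$. Concretely, for $d\nu = \delta(x)^{\beta - 2}\,dx$, which lies in $\mathsf{M}^{n/(2-\beta)}(\Omega)$ by \eqref{eqn:example_mu}, one has $|\nu|(B(\xi, r) \cap \Omega) = +\infty$ for every boundary ball, so the decay inequality you want to iterate has an infinite right-hand side. This is precisely the obstruction the introduction attributes to the ``three-step'' method: it ``cannot be applied to locally integrable $\bm{b}$, $\mu$ and $\nu$.''

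What actually rescues the estimate is a global mechanism rather than a pointwise boundary one: the solution operator vanishes at $\partial \Omega$ at a rate quantified by \eqref{eqn:CDC}, and this decay damps the (possibly infinite) mass of $\nu$ accumulating near the boundary. The cited proof proceeds in that spirit --- decompose $\nu$ over a Whitney covering, solve for each compactly supported piece, and sum, using the capacitary boundary decay of each piece's solution to make the series converge in $C^{\beta_{1}}(\cl{\Omega})$ --- essentially a Green-function/balayage argument without an explicit Green function. Your interior oscillation decay and everything downstream of the a priori estimate would survive, but the boundary step needs to be replaced by this global summation argument; as written, your plan fails exactly where you predicted it would be delicate, and for a structural rather than a technical reason.
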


Finally, we introduce the following notation.

\begin{definition}
Suppose that $\beta \in (0, 1)$.
For $\nu \in \mathsf{M}^{ n - 2 + \beta }(\Omega)$,
we denote by $\mathbf{G}_{0} \nu$ the weak solution $u \in H^{1}_{\loc}(\Omega) \cap C(\cl{\Omega})$ to \eqref{eqn:poisson}.
\end{definition}

\section{Lower-order terms}\label{sec:perturbation}

Let us recall the Fredholm alternative.

\begin{lemma}[{\cite[Theorem 5.3]{MR1814364}}]\label{lem:fredholm}
Let $X$ be a normed space, and let $T$ be a compact linear operator from $X$ into itself.
Then, either (i) the homogeneous equation
\begin{equation*}
x - T x = 0
\end{equation*}
has a nontrivial solution $x \in X$, or (ii) for each $y \in X$, the equation
\[
x - T x = y
\]
has a unique solution $x \in X$.
Moreover, in case (ii), the operator $(I - T)^{-1}$ exists and is bounded.
\end{lemma}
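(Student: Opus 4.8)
The plan is to establish the classical Riesz--Schauder theory for the operator $S := I - T$, from which the stated alternative is immediate. The whole argument rests on two structural facts: the coercivity estimate
\[
\| S x \| \ge c \, \dist(x, \ker S) \qquad (x \in X)
\]
for some constant $c > 0$, and the equivalence ``$S$ injective $\iff$ $S$ surjective''. Granting these, if $\ker S \neq \{ 0 \}$ we are in case (i); while if $\ker S = \{ 0 \}$ the estimate reads $\| S x \| \ge c \| x \|$, and surjectivity then yields a two-sided inverse $(I - T)^{-1}$ satisfying $\| (I - T)^{-1} y \| \le c^{-1} \| y \|$ (no completeness of $X$ being needed for this), which is exactly case (ii).

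First I would show $\ker S$ is finite-dimensional: on $\ker S$ one has $T x = x$, so the restriction of the compact operator $T$ to $\ker S$ is the identity, whence the closed unit ball of $\ker S$ is precompact and $\dim \ker S < \infty$. Next I would prove the coercivity estimate by contradiction. If it fails there are $x_{j}$ with $\dist(x_{j}, \ker S) = 1$ and $\| S x_{j} \| \to 0$, and after subtracting nearest elements of $\ker S$ one may assume $\{ x_{j} \}$ bounded. Compactness of $T$ gives a subsequence with $T x_{j_{k}} \to v$, so $x_{j_{k}} = S x_{j_{k}} + T x_{j_{k}} \to v$ and $S v = 0$, i.e. $v \in \ker S$; but then $\dist(x_{j_{k}}, \ker S) \to 0$, contradicting $\dist(x_{j}, \ker S) = 1$. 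The same estimate shows at once that the range $\mathcal{R}(S)$ is closed.

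The heart of the matter, and the step I expect to be the main obstacle, is the dichotomy between injectivity and surjectivity. Here I would study the nested ranges $\mathcal{R}(S^{j})$ and null spaces $\ker(S^{j})$. Writing $S^{j} = I - T_{j}$, where $T_{j}$ is a finite linear combination of the compact operators $T, T^{2}, \dots, T^{j}$ and hence compact, the preceding paragraph applied to $S^{j}$ shows each $\mathcal{R}(S^{j})$ is closed, and the chains
\[
\mathcal{R}(S) \supseteq \mathcal{R}(S^{2}) \supseteq \cdots, \qquad \ker S \subseteq \ker(S^{2}) \subseteq \cdots
\]
are monotone. I would prove the descending chain of ranges stabilizes: otherwise the inclusions are strict, and Riesz's lemma provides unit vectors $x_{j} \in \mathcal{R}(S^{j})$ with $\dist(x_{j}, \mathcal{R}(S^{j+1})) \ge 1/2$; for $k < l$ one has $T x_{k} - T x_{l} = x_{k} - (S x_{k} + x_{l} - S x_{l})$ with $S x_{k} + x_{l} - S x_{l} \in \mathcal{R}(S^{k+1})$, so $\| T x_{k} - T x_{l} \| \ge \dist(x_{k}, \mathcal{R}(S^{k+1})) \ge 1/2$, contradicting precompactness of $\{ T x_{j} \}$. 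An entirely symmetric argument, applying Riesz's lemma to strictly increasing null spaces, shows the ascending chain of kernels stabilizes.

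Finally I would deduce the equivalence from these stabilizations. If $S$ were injective but not surjective, picking $y \notin \mathcal{R}(S)$ gives $S^{j} y \in \mathcal{R}(S^{j}) \setminus \mathcal{R}(S^{j+1})$ for every $j$ (any relation $S^{j} y = S^{j+1} z$ would force $y = S z$ by injectivity), so the range chain never stabilizes, a contradiction; hence injective implies surjective. Conversely, if $S$ were surjective but not injective, taking $x \neq 0$ with $S x = 0$ and solving $S^{j} x_{j} = x$ gives $x_{j} \in \ker(S^{j+1}) \setminus \ker(S^{j})$ for every $j$, so the kernel chain never stabilizes, again a contradiction; hence surjective implies injective. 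Combining this dichotomy with the first paragraph completes the proof of the Fredholm alternative.
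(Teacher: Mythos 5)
The paper gives no proof of this lemma at all---it is quoted verbatim from Gilbarg--Trudinger, Theorem 5.3---and your argument is precisely the standard Riesz--Schauder proof found there (finite-dimensional kernel via Riesz's theorem, the distance estimate $\| S x \| \ge c \, \dist(x, \ker S)$ by a compactness contradiction, stabilization of the chains $\mathcal{R}(S^{j})$ and $\ker(S^{j})$ via Riesz's lemma, and the resulting injective-iff-surjective dichotomy), so it is correct and follows essentially the same route as the cited source. The one spot where you are terser than the setting warrants is the claim that the estimate yields closedness of $\mathcal{R}(S)$ ``at once'': since $X$ is only assumed normed, not Banach, bounded-below-on-the-quotient does not by itself give a closed range, and you must re-run the compactness device (reduce to a bounded sequence with $S x_{j} \to y$, extract $T x_{j_{k}} \to w$, and pass to the limit in $x_{j_{k}} = S x_{j_{k}} + T x_{j_{k}}$)---which is exactly the argument you already wrote out for the coercivity estimate.
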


We apply Lemma \ref{lem:fredholm} to the operator
\begin{equation}\label{eqn:T}
T \colon
\mathsf{M}^{ n - 2 + \beta }(\Omega)
\ni
\nu
\mapsto
T \nu
:=
- \left( \bm{b} \cdot \nabla + \mu \right) \mathbf{G}_{0} \nu
\in
\mathsf{M}^{ n - 2 + \beta }(\Omega).
\end{equation}

\begin{lemma}\label{lem:bound_of_lot}
Assume that \eqref{eqn:b} and \eqref{eqn:mu} hold. 
Then, the operator $T$ in \eqref{eqn:T} is a compact operator from $\mathsf{M}^{ n - 2 + \beta }(\Omega)$ into itself.
Moreover, we have
\begin{equation}\label{eqn;boundedness_of_t}
\begin{split}
& 
\trinorm{ T \nu }_{ n - 2 + \beta, \Omega}
\\
& \quad \le
C_{2} \, \diam(\Omega)^{\beta}
\left( \trinorm{ |\bm{b}|^{2} m }_{n - 2 + 2 \beta, \Omega}^{1 / 2} + \trinorm{\mu}_{n - 2 + \beta, \Omega} \right) \trinorm{\nu}_{n - 2 + \beta, \Omega}
\end{split}
\end{equation}
for all $\nu \in \mathsf{M}^{n - 2 + \beta}(\Omega)$.
\end{lemma}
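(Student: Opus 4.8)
The plan is to write $u=\mathbf{G}_0\nu$ and observe that, by definition, $T\nu=-(\bm{b}\cdot\nabla u)\,m-u\,\mu$ is a genuine measure on $\Omega$, since $\bm{b}\cdot\nabla u\in L^1_{\loc}(\Omega)$ (because $\bm{b}\in L^2_{\loc}$ and $\nabla u\in L^2_{\loc}$) and $u\in C(\cl{\Omega})$. Both assertions of the lemma will follow once I control $|T\nu|$ on every ball $B(x,r)$ with $B(x,2r)\subset\Omega$, that is with $0<r<\delta(x)/2$, which are precisely the balls entering \eqref{eqn:def_norm}. Throughout I abbreviate $p:=n/(2-2\beta)$, so that $n/q=2-\beta$, $n/p=2-2\beta$, and $\tfrac12(n-n/p)+\tfrac12(n-2)=n-n/q$. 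The guiding idea is that both lower-order terms can be reduced to the sup-norm of $u$ through a Caccioppoli estimate; this is exactly what makes the compactness argument go through.

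For the bound \eqref{eqn;boundedness_of_t} I would first record a Morrey-type energy estimate. Testing $-\divergence(A\nabla u)=\nu$ against $\varphi^2u$ with a cut-off $\varphi$ equal to $1$ on $B(x,r)$ and supported in $B(x,\tfrac32 r)$, and using \eqref{eqn:A} with Young's inequality, gives the Caccioppoli inequality
\[
\int_{B(x,r)}|\nabla u|^2\,dx\le\frac{C}{r^2}\int_{B(x,\frac32 r)}|u|^2\,dx+C\int_{B(x,\frac32 r)}|u|\,d|\nu|.
\]
Bounding $|u|$ by $\|u\|_{L^\infty(\Omega)}\le C_1\trinorm{\nu}_{q,\Omega}$ (Lemma \ref{lem:poisson}), using the defining bound of $\trinorm{\nu}_{q,\Omega}$, and recalling $r\le\diam(\Omega)$, I obtain $\int_{B(x,r)}|\nabla u|^2\,dx\le C\,\trinorm{\nu}_{q,\Omega}^2\,r^{n-2}$. (This is first proved for $r<\delta(x)/3$, where $B(x,\tfrac32 r)$ has its double in $\Omega$, and then extended to all $r<\delta(x)/2$ by covering the inner ball $B(x,r)$ with a bounded number of such sub-balls.) For the first-order term I would then combine the Cauchy--Schwarz inequality, the hypothesis $|\bm{b}|^2m\in\mathsf{M}^p(\Omega)$ from \eqref{eqn:b}, and the energy estimate:
\[
\int_{B(x,r)}|\bm{b}|\,|\nabla u|\,dx\le\Big(\int_{B(x,r)}|\bm{b}|^2\,dx\Big)^{1/2}\Big(\int_{B(x,r)}|\nabla u|^2\,dx\Big)^{1/2}\le C\,\trinorm{|\bm{b}|^2m}_{p,\Omega}^{1/2}\,\trinorm{\nu}_{q,\Omega}\,r^{\,n-n/q}.
\]
The zeroth-order term is immediate: $\int_{B(x,r)}|u|\,d|\mu|\le\|u\|_{L^\infty}\,|\mu|(B(x,r))\le C\,\trinorm{\mu}_{q,\Omega}\trinorm{\nu}_{q,\Omega}\,r^{\,n-n/q}$. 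Multiplying by $r^{\,n/q-n}$, taking the supremum over admissible balls, and multiplying by $\diam(\Omega)^{2-n/q}$ yields \eqref{eqn;boundedness_of_t} and, in particular, $T\nu\in\mathsf{M}^q(\Omega)$.

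For compactness I would take a sequence $\{\nu_j\}$ bounded in $\mathsf{M}^q(\Omega)$ and set $u_j:=\mathbf{G}_0\nu_j$. By \eqref{eqn:hoelder_esti} the family $\{u_j\}$ is bounded in $C^{\beta_1}(\cl{\Omega})$, so by Arzel\`a--Ascoli a subsequence $\{u_{j_k}\}$ converges uniformly on $\cl{\Omega}$; in particular $\|u_{j_k}-u_{j_l}\|_{L^\infty(\Omega)}\to0$ as $k,l\to\infty$. Writing $w_{kl}:=u_{j_k}-u_{j_l}=\mathbf{G}_0(\nu_{j_k}-\nu_{j_l})$, which solves $-\divergence(A\nabla w_{kl})=\nu_{j_k}-\nu_{j_l}$, I would rerun the Caccioppoli inequality for $w_{kl}$, bounding $|w_{kl}|$ by its sup-norm and using only the uniform bound on $\trinorm{\nu_j}_{q,\Omega}$ to estimate $|\nu_{j_k}-\nu_{j_l}|$; this gives
\[
\int_{B(x,r)}|\nabla w_{kl}|^2\,dx\le C\big(\|w_{kl}\|_{L^\infty}^2+\|w_{kl}\|_{L^\infty}\big)r^{\,n-2}.
\]
Repeating the two estimates of the previous paragraph on $T(\nu_{j_k}-\nu_{j_l})=-(\bm{b}\cdot\nabla w_{kl})m-w_{kl}\mu$ then yields
\[
\trinorm{T\nu_{j_k}-T\nu_{j_l}}_{q,\Omega}\le C\Big(\trinorm{|\bm{b}|^2m}_{p,\Omega}^{1/2}+\trinorm{\mu}_{q,\Omega}\Big)\big(\|w_{kl}\|_{L^\infty}+\|w_{kl}\|_{L^\infty}^{1/2}\big)\longrightarrow0,
\]
so $\{T\nu_{j_k}\}$ is Cauchy and, by the completeness proved in Theorem \ref{thm:completeness}, converges in $\mathsf{M}^q(\Omega)$. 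Hence $T$ is compact.

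The step I expect to be the main obstacle is controlling the first-order term in the compactness argument. Since the data $\nu_{j_k}-\nu_{j_l}$ converge only vaguely, the gradients $\nabla w_{kl}$ converge at best weakly in $L^2_{\loc}$, so one cannot pass to the limit directly in $\int_B|\bm{b}|\,|\nabla w_{kl}|$ — the absolute value destroys the weak-convergence test against the fixed $L^2_{\loc}$ density $\bm{b}$. The resolution, and the reason for routing everything through Caccioppoli, is that strong gradient convergence is never needed: the energy inequality converts the $L^\infty$-smallness of $w_{kl}$ into smallness of the local energy with the correct power $r^{\,n-2}$, and Cauchy--Schwarz against the fixed, merely bounded norm $\trinorm{|\bm{b}|^2m}_{p,\Omega}$ then forces the decay of $\trinorm{T\nu_{j_k}-T\nu_{j_l}}_{q,\Omega}$ uniformly in $B(x,r)$. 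A minor technical point to check is that $\varphi^2 u$ (resp.\ $\varphi^2 w_{kl}$) is an admissible test function for the measure-data formulation, which holds because it is continuous, compactly supported, and lies in $H^1$, hence is reachable by approximation from $C_c^\infty(\Omega)$.
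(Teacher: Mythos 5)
Your proposal is correct and follows essentially the same route as the paper's proof: a Caccioppoli inequality for $u=\mathbf{G}_0\nu$ reducing the local energy to $\|u\|_{L^\infty}\le C_1\trinorm{\nu}_{q,\Omega}$, Cauchy--Schwarz against $\trinorm{|\bm b|^2m}_{n/(2-2\beta),\Omega}$ for the drift term, the direct $L^\infty$ bound for the $\mu$ term, a covering step to pass from the Caccioppoli balls to all balls in \eqref{eqn:def_norm}, and for compactness the Arzel\`a--Ascoli theorem combined with the Caccioppoli estimate for differences and the completeness of $\mathsf{M}^{q}(\Omega)$ from Theorem \ref{thm:completeness}. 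The exponent bookkeeping ($\tfrac12(n-n/p)+\tfrac12(n-2)=n-2+\beta=n-n/q$) matches the paper's, and your explicit remarks on the covering argument and the admissibility of $\varphi^2u$ as a test function only make explicit what the paper leaves implicit.
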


\begin{proof}
Let $u = \mathbf{G}_{0} \nu$.
By \eqref{eqn:poisson}, we have
\begin{equation}\label{eqn:l-infty_esti}
\| u \|_{L^{\infty}(\Omega)}
\le
C_{1} \, \diam(\Omega)^{\beta} \trinorm{ \nu }_{ n - 2 + \beta, \Omega}.
\end{equation}
Let $B(x, r)$ be a ball such that $B(x, 4r) \subset \Omega$.
Take $\eta \in C_{c}^{\infty}(B(x, 2r))$ such that $\eta = 1$ on $B(x, r)$ and $|\nabla \eta| \le C / r$.
Testing \eqref{eqn:poisson} with $u \eta^{2}$, we obtain
\[
\begin{split}
\int_{B(x, r)} |\nabla u|^{2} \, dx
& \le
C \left(
\frac{1}{r^{2}} \int_{B(x, 2r)} |u|^{2} \, dx + \int_{B(x, 2r)} |u| \, d |\nu|
\right).
\end{split}
\]
By \eqref{eqn:esti_n/2}, we also get
\begin{equation}\label{eqn:cacioppolli}
\begin{split}
\int_{B(x, r)} |\nabla u|^{2} \, dx
\le
C \left(
\| u \|_{L^{\infty}(\Omega)}^{2} + \| u \|_{L^{\infty}(\Omega)} \trinorm{\nu}_{n - 2, \Omega}
\right) r^{n - 2}.
\end{split}
\end{equation}
The right-hand side is estimated by \eqref{eqn:l-infty_esti}.
Meanwhile, by H\"{o}lder's inequality, we have
\[
\int_{B(x, r)} \left| \bm{b} \cdot \nabla u \right| \, dx
\le
\left( \int_{B(x, r)} |\bm{b}|^{2} \, dx \right)^{1 / 2}
\left( \int_{B(x, r)} |\nabla u|^{2} \, dx \right)^{1 / 2}.
\]
Combining these inequalities with \eqref{eqn:b}, we obtain
\begin{equation}\label{eqn:hoelder_b}
\begin{split}
& 
\int_{B(x, r)} |\bm{b} \cdot \nabla u| \, dx 
\\
& \quad \le
C \, \diam(\Omega)^{\beta} \trinorm{ |\bm{b}|^{2} m }_{n - 2 + 2 \beta, \Omega}^{1 / 2} \trinorm{ \nu }_{n - 2 + \beta, \Omega} r^{n - 2 + \beta}.
\end{split}
\end{equation}
Meanwhile, by \eqref{eqn:mu} and \eqref{eqn:l-infty_esti}, we have
\[
\int_{B(x, r)} |u| \, d |\mu|
\le
C \diam(\Omega)^{\beta} \trinorm{ \mu }_{n - 2 + \beta, \Omega} \trinorm{ \nu }_{n - 2 + \beta, \Omega} r^{n - 2 + \beta}.
\]
By a simple covering argument, we find that \eqref{eqn;boundedness_of_t} holds.

Let us prove the compactness of $T$.
Let $\{ \nu_{j} \}$ be a bounded sequence of measures in $\mathsf{M}^{n - 2 + \beta}(\Omega)$, and assume that
$\trinorm{ \nu_{j} }_{n - 2 + \beta, \Omega} \le M < \infty$.
Set $u_{j} = \mathbf{G}_{0} \nu_{j}$.
Since $\{ u_{j} \}$ is bounded in $C^{\beta_{0}}(\cl{\Omega})$,
by the Ascoli-Arzel\`{a} theorem, we can take a subsequence of $\{ u_{j} \}$ and $u \in C(\cl{\Omega})$ such that
$u_{j} \to u$ uniformly in $\Omega$.
Meanwhile, by \eqref{eqn:cacioppolli}, we have
\[
\begin{split}
& 
\int_{B(x, r)} |\nabla (u_{j} - u_{i})|^{2} \, dx
\\
& \quad \le
C \left(
\| u_{j} - u_{i} \|_{L^{\infty}(\Omega)}^{2} + 2 \| u_{j} - u_{i} \|_{L^{\infty}(\Omega)} \diam(\Omega)^{\beta} M
\right) r^{n - 2}
\end{split}
\]
for all $i, j \ge 1$.
It follows from \eqref{eqn:b} that $\{ (\bm{b} \cdot \nabla u_{j}) m \}$ is a Cauchy sequence in $\mathsf{M}^{n - 2 + \beta}(\Omega)$.
Similarly, $\{ \mu u_{j} \}$ is a Cauchy sequence in $\mathsf{M}^{n - 2 + \beta}(\Omega)$.
By Theorem \ref{thm:completeness}, $T$ is compact.
\end{proof}

\begin{corollary}\label{cor:fredholm}
Assume that \eqref{eqn:b} and \eqref{eqn:mu} hold. 
Then, either (i) the homogeneous equation \eqref{eqn:EV} has 
a nontrivial solution $u \in H^{1}_{\loc}(\Omega) \cap C(\cl{\Omega})$, or (ii) for each $\nu \in \mathsf{M}^{ n - 2 + \beta }(\Omega)$, the equation
\begin{equation}\label{eqn:nonEV}
\begin{cases}
- \divergence (A \nabla u) + \bm{b} \cdot \nabla u + \mu u = \nu & \text{in} \ \Omega,
\\
u = 0 & \text{on} \ \partial \Omega.
\end{cases}
\end{equation}
has a unique solution $u \in H^{1}_{\loc}(\Omega) \cap C^{\beta_{1}}(\cl{\Omega})$.
Moreover, in case (ii), the operator
\begin{equation}\label{eqn:g_t}
\mathbf{G}_{T} \colon \mathsf{M}^{n - 2 + \beta}(\Omega) \ni \nu
\mapsto
\mathbf{G}_{T} \nu := u \in H^{1}_{\loc}(\Omega) \cap C^{\beta_{1}}(\cl{\Omega})
\end{equation}
exists and is bounded.
\end{corollary}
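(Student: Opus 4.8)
The plan is to recast the boundary value problem \eqref{eqn:nonEV} as an abstract functional equation $(I - T)f = \nu$ in the Banach space $\mathsf{M}^{q}(\Omega)$ and to invoke the Fredholm alternative (Lemma \ref{lem:fredholm}), whose hypotheses are met since $T$ is compact by Lemma \ref{lem:bound_of_lot}. The bridge is the operator $\mathbf{G}_{0}$: given $f \in \mathsf{M}^{q}(\Omega)$, set $u := \mathbf{G}_{0} f$. By the definition of $T$ one has $T f = -(\bm{b} \cdot \nabla + \mu) u$, so the identity $(I - T) f = \nu$ is equivalent to $-\divergence(A \nabla u) = f = \nu - \bm{b} \cdot \nabla u - \mu u$, that is, to $u$ being a weak solution of \eqref{eqn:nonEV} with vanishing boundary values. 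No circularity arises in this direction: starting from $f$ with $(I-T)f = \nu$ and testing the defining relation of $\mathbf{G}_{0}f$ against $\varphi \in C_{c}^{\infty}(\Omega)$ produces exactly the weak formulation of \eqref{eqn:nonEV}.

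More precisely, I would prove that $f \mapsto \mathbf{G}_{0} f$ is a bijection from the solution set of $(I - T) f = \nu$ in $\mathsf{M}^{q}(\Omega)$ onto the set of weak solutions of \eqref{eqn:nonEV} lying in $H^{1}_{\loc}(\Omega) \cap C^{\beta_{1}}(\cl{\Omega})$. The forward map is well defined and lands in $C^{\beta_{1}}(\cl{\Omega})$ by Lemma \ref{lem:poisson}. For the inverse, given such a solution $u$ I would set $f := \nu - \bm{b} \cdot \nabla u - \mu u$, check that $f \in \mathsf{M}^{q}(\Omega)$, deduce $u = \mathbf{G}_{0} f$ from the uniqueness in Lemma \ref{lem:poisson}, and hence $T f = -(\bm{b} \cdot \nabla + \mu) u = f - \nu$, i.e. $(I - T) f = \nu$. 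Injectivity of $\mathbf{G}_{0}$ (again from the uniqueness in Lemma \ref{lem:poisson}, since $\mathbf{G}_{0} f = 0$ forces $\int_{\Omega} \varphi \, df = 0$ for all $\varphi$, hence $f = 0$) makes the two maps mutually inverse.

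With the bijection in hand, the dichotomy transfers directly. In the alternative where $\ker(I - T) \neq \{0\}$, any $f \neq 0$ with $T f = f$ produces a nontrivial $u = \mathbf{G}_{0} f \in H^{1}_{\loc}(\Omega) \cap C(\cl{\Omega})$ solving \eqref{eqn:EV}, giving case (i). In the alternative where $(I - T)^{-1}$ exists and is bounded, the preimage of $\nu$ is a singleton, so \eqref{eqn:nonEV} has exactly one solution, namely $\mathbf{G}_{T} \nu := \mathbf{G}_{0} (I - T)^{-1} \nu$; uniqueness of $u$ follows because the difference of two solutions solves \eqref{eqn:EV} and thus corresponds to an element of $\ker(I-T) = \{0\}$. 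The boundedness of $\mathbf{G}_{T}$ into $C^{\beta_{1}}(\cl{\Omega})$ is then immediate by composing the bounded operator $(I - T)^{-1}$ with the bounded map $\mathbf{G}_{0}$ supplied by \eqref{eqn:hoelder_esti}.

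The main obstacle is the membership $f = \nu - \bm{b} \cdot \nabla u - \mu u \in \mathsf{M}^{q}(\Omega)$ for an arbitrary solution $u$, which is precisely what makes the inverse map well defined and underlies uniqueness. The term $\mu u$ is harmless, since $u \in L^{\infty}(\cl{\Omega})$ and $\mu \in \mathsf{M}^{q}(\Omega)$ give $\trinorm{\mu u}_{q, \Omega} \le \| u \|_{L^{\infty}(\Omega)} \trinorm{\mu}_{q, \Omega}$. For the first-order term I would establish a Caccioppoli-type bound $\int_{B(x, r)} |\nabla u|^{2} \, dx \le C \| u \|_{L^{\infty}(\Omega)}^{2} r^{n - 2}$ on balls with $B(x, 4r) \subset \Omega$, by testing the equation with $u \eta^{2}$ and using ellipticity; the first- and zeroth-order contributions are controlled because \eqref{eqn:b} and \eqref{eqn:mu} bound $\int_{B(x,r)} |\bm{b}|^{2} \, dx$ and $|\mu|(B(x,r))$ by constant multiples of $r^{n - 2 + 2\beta}$ and $r^{n - 2 + \beta}$, both of higher order in $r$ than the leading $r^{n-2}$ on the bounded set $\Omega$, after absorbing a small gradient term into the left-hand side. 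Feeding this bound into the H\"{o}lder inequality exactly as in \eqref{eqn:hoelder_b} yields $(\bm{b} \cdot \nabla u) m \in \mathsf{M}^{q}(\Omega)$ and completes the verification.
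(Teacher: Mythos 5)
Your proposal is correct and follows essentially the same route as the paper: recast \eqref{eqn:nonEV} as $(I-T)\sigma=\nu$ in $\mathsf{M}^{q}(\Omega)$, apply Lemma \ref{lem:fredholm} using the compactness from Lemma \ref{lem:bound_of_lot}, and close the loop by showing that any continuous solution of \eqref{eqn:EV} (or a difference of two solutions of \eqref{eqn:nonEV}) yields $-\divergence(A\nabla u)\in\mathsf{M}^{q}(\Omega)$ via the Caccioppoli estimate obtained by testing with $u\eta^{2}$ and absorbing the $\bm{b}$-term with Young's inequality. The only cosmetic difference is that you organize the argument as an explicit bijection $f\mapsto\mathbf{G}_{0}f$, which is a clean packaging of the same computations.
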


\begin{proof}
Assume that there is a non-trivial solution $\sigma \in \mathsf{M}^{ n - 2 + \beta }(\Omega)$ to
\begin{equation}\label{eqn:FH_hom}
\sigma - T \sigma = 0.
\end{equation}
Then, $u := \mathbf{G}_{0} \sigma \in H^{1}_{\loc}(\Omega) \cap C^{\beta_{1}}(\cl{\Omega})$ is a non-trivial solution to \eqref{eqn:EV}.
We prove the converse statement.
Assume the existence of a non-trivial solution $u \in H^{1}_{\loc}(\Omega) \cap C(\cl{\Omega})$ to \eqref{eqn:EV}.
Take a ball $B(x, 4r) \subset \Omega$ and $\eta \in C_{c}^{\infty}(B(x, 2r))$ such that $\eta = 1$ on $B(x, r)$ and $|\nabla \eta| \le C / r$.
Testing \eqref{eqn:EV} with $u \eta^{2}$, we get
\[
\begin{split}
\int_{B(x, 2r)} |\nabla u|^{2} \eta^{2} \, dx
& \le
\frac{C}{r^{2}} \int_{B(x, 2r)} u^{2} \, dx 
\\
& \quad + \left| \int_{B(x, 2r)} \bm{b} \cdot \nabla u u \eta^{2} \, dx + \int_{B(x, 2r)} u^{2} \eta^{2} \, d \mu \right|.
\end{split}
\]
By the Young inequality $ab \le (\epsilon / 2) a^{2} + (2 \epsilon)^{-1} b^{2}$ ($a, b, \epsilon \ge 0$), we have
\[
\left| \int_{B(x, 2r)} \bm{b} \cdot \nabla u u \eta^{2} \, dx \right|
\le
\frac{\epsilon}{2} \int_{B(x, 2r)} |\nabla u|^{2} \eta^{2} \, dx
+
\frac{1}{2 \epsilon} \int_{B(x, 2r)} |\bm{b}|^{2} u^{2} \eta^{2} \, dx.
\]
Combining these inequalities with \eqref{eqn:b}, \eqref{eqn:mu} and \eqref{eqn:esti_n/2}, we obtain
\[
\int_{B(x, r)} |\nabla u|^{2} \, dx \le C \| u \|_{L^{\infty}(\Omega)}^{2} r^{n - 2}.
\]
It follows from \eqref{eqn:b} that $(\bm{b} \cdot \nabla u) m \in \mathsf{M}^{ n - 2 + \beta }(\Omega)$.
Meanwhile, $\mu u \in \mathsf{M}^{ n - 2 + \beta }(\Omega)$ because $u$ is bounded and \eqref{eqn:mu} holds.
Therefore, $\sigma := - \divergence(A \nabla u)$ belongs to $\mathsf{M}^{ n - 2 + \beta }(\Omega)$.
It is also a non-trivial solution to \eqref{eqn:FH_hom}.

Assume that there is no non-trivial solution to \eqref{eqn:FH_hom}.
By Lemmas \ref{lem:fredholm} and \ref{lem:bound_of_lot}, for each $\nu \in \mathsf{M}^{ n - 2 + \beta }(\Omega)$, there exists a unique solution
$\sigma \in \mathsf{M}^{ n - 2 + \beta}(\Omega)$ to $\sigma - T \sigma = \nu$.
Then, $u := \mathbf{G}_{0} \sigma \in H^{1}_{\loc}(\Omega) \cap C^{\beta_{1}}(\cl{\Omega})$ satisfies
\[
- \divergence(A \nabla u) = \nu + T \sigma = \nu - (\bm{b} \cdot \nabla + \mu) u.
\]
Let us prove the uniqueness of $u$.
If there are two different solutions $u_{1}$ and $u_{2}$ to \eqref{eqn:nonEV}, then $v = u_{1} - u_{2}$ is a non-trivial solution to \eqref{eqn:EV}.
Since $\sigma := - \divergence(A \nabla v)$ belongs to $\mathsf{M}^{ n - 2 + \beta }(\Omega)$, this contradicts to assumption.
\end{proof}

\begin{remark}\label{rem:small}
Assume further that 
\[
\trinorm{ |\bm{b}|^{2} m }_{ n - 2 + 2 \beta, \Omega}^{1 / 2} + \trinorm{\mu}_{ n - 2 + \beta, \Omega} \le (2 C_{1} C_{2})^{-1}.
\]
where $C_{1}$ and $C_{2}$ are constants in Lemmas \ref{eqn:poisson} and \ref{lem:bound_of_lot}, respectively.
Then, we can get an explicit bound of \eqref{eqn:g_t} by the contractive mapping theorem.
\end{remark}

\section{Inhomogeneous boundary data}\label{sec:BVP}

Let us extend Corollary \ref{cor:fredholm} for $g (\neq 0) \in C^{\beta}( \partial \Omega )$.

\begin{lemma}\label{lem:BVP}
Let $g \in C^{\beta}(\partial \Omega)$.
Then, there exists a unique weak solution $w \in H^{1}_{\loc}(\Omega) \cap C(\cl{\Omega})$ to \eqref{eqn:DE0}.
Moreover, there exists positive constants $C$ and $0 < \beta_{0} \le \beta$ such that
\[
\| w \|_{C^{\beta_{0}}(\Omega)}
\le
C \| g \|_{C^{\beta}(\partial \Omega)}.
\]
Assume further that \eqref{eqn:b} and \eqref{eqn:mu} hold.
Then, $\bm{b} \cdot \nabla w + \mu w \in \mathsf{M}^{ n - 2 + \beta }(\Omega)$ and
\begin{equation}\label{eqn:esti_w}
\trinorm{ \bm{b} \cdot \nabla w + \mu w }_{ n - 2 + \beta, \Omega}
\le
C \| w \|_{L^{\infty}(\Omega)}.
\end{equation}
\end{lemma}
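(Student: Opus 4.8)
The plan is to split the statement into two essentially independent parts: the construction and quantitative boundary regularity of the $A$-harmonic extension $w$, which is classical and which I would handle by citation, and the Morrey bound \eqref{eqn:esti_w} for the lower-order expression $\bm b\cdot\nabla w+\mu w$, which is the only genuinely computational step and which mirrors the proof of Lemma \ref{lem:bound_of_lot}.

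For existence, uniqueness and the interior–boundary Hölder estimate I would invoke the classical theory of the Dirichlet problem for $-\divergence(A\nabla\,\cdot\,)$ with continuous boundary data, i.e. the three-step scheme recalled in the introduction. Since $g\in C^{\beta}(\partial\Omega)\subset C(\partial\Omega)$, Perron's method produces an $A$-harmonic function $w\in C(\cl{\Omega})$ with $w=g$ on $\partial\Omega$, every boundary point being regular because \eqref{eqn:CDC} holds at every $\xi\in\partial\Omega$. Combining the interior De Giorgi–Nash–Moser estimate with the boundary oscillation decay produced by \eqref{eqn:CDC} gives the boundedness of the extension operator \eqref{eqn:harmonic_extention}, that is $\|w\|_{C^{\beta_{0}}(\Omega)}\le C\|g\|_{C^{\beta}(\partial\Omega)}$ for some $0<\beta_{0}\le\beta$; uniqueness follows since the difference of two solutions is $A$-harmonic, continuous on $\cl{\Omega}$ and vanishes on $\partial\Omega$, hence is $0$ by the maximum principle. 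I would keep this step short by citing the references already listed for \eqref{eqn:DE0}. Note that no finite-energy hypothesis is needed: $w$ is a bounded local weak solution, hence $w\in H^{1}_{\loc}(\Omega)$ by the interior Caccioppoli inequality.

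For \eqref{eqn:esti_w} the plan is to repeat, with a zero right-hand side, the estimate used in the proof of Lemma \ref{lem:bound_of_lot}. Fix $x\in\Omega$ and $0<r<\delta(x)/2$, so that $B(x,2r)\subset\Omega$, and take $\eta\in C_{c}^{\infty}(B(x,2r))$ with $\eta=1$ on $B(x,r)$ and $|\nabla\eta|\le C/r$. Testing $-\divergence(A\nabla w)=0$ against $w\eta^{2}$ gives the homogeneous Caccioppoli bound
\[
\int_{B(x,r)}|\nabla w|^{2}\,dx
\le
\frac{C}{r^{2}}\int_{B(x,2r)}w^{2}\,dx
\le
C\|w\|_{L^{\infty}(\Omega)}^{2}\,r^{n-2}.
\]
I would then estimate the two contributions separately. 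For the drift term, Hölder's inequality together with \eqref{eqn:b} yields
\[
\begin{split}
\int_{B(x,r)}|\bm b\cdot\nabla w|\,dx
&\le
\left(\int_{B(x,r)}|\bm b|^{2}\,dx\right)^{1/2}
\left(\int_{B(x,r)}|\nabla w|^{2}\,dx\right)^{1/2}
\\
&\le
C\,\trinorm{|\bm b|^{2}m}_{n/(2-2\beta),\Omega}^{1/2}\,\|w\|_{L^{\infty}(\Omega)}\,r^{n-2+\beta},
\end{split}
\]
since the exponents $(n-2+2\beta)/2$ and $(n-2)/2$ add up to $n-2+\beta$. For the zeroth-order term, \eqref{eqn:mu} gives directly
\[
\int_{B(x,r)}|w|\,d|\mu|
\le
\|w\|_{L^{\infty}(\Omega)}\,|\mu|(B(x,r))
\le
C\,\trinorm{\mu}_{n/(2-\beta),\Omega}\,\|w\|_{L^{\infty}(\Omega)}\,r^{n-2+\beta}.
\]
Adding these, dividing by $r^{\,n-n/q}=r^{n-2+\beta}$ with $q=n/(2-\beta)$, and taking the supremum over admissible $x$ and $r$ gives $\bm b\cdot\nabla w+\mu w\in\mathsf M^{n/(2-\beta)}(\Omega)$ together with \eqref{eqn:esti_w}, the (finite) Morrey norms of the coefficients being absorbed into $C$.

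I expect the only real obstacle to be the first part: producing $w$ and its quantitative boundary Hölder continuity without a finite-energy (or $H^{1}$-trace) hypothesis on $g$. This is exactly where \eqref{eqn:CDC} enters, and it is the content of the classical results quoted in the introduction, so I would dispatch it by citation. The second part is routine once $\|w\|_{L^{\infty}(\Omega)}$ and the homogeneous Caccioppoli estimate are in hand: it is the computation of Lemma \ref{lem:bound_of_lot} with the source $\nu$ set to zero, so the extra $\|u\|_{L^{\infty}(\Omega)}\trinorm{\nu}_{q,\Omega}$ term that appeared there simply disappears.
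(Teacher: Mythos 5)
Your proposal is correct and follows essentially the same route as the paper: existence, uniqueness and the global H\"older bound for $w$ are dispatched by citation to the classical theory for \eqref{eqn:DE0} under \eqref{eqn:CDC}, and \eqref{eqn:esti_w} is obtained from the homogeneous Caccioppoli inequality plus H\"older's inequality for the drift term and the trivial bound for the $\mu$-term, exactly as in the paper (which phrases the Caccioppoli bound via $\osc_{\Omega} w \le \osc_{\partial\Omega} g$ rather than $\| w \|_{L^{\infty}(\Omega)}$, an immaterial difference here).
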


\begin{proof}
As mentioned in Section \ref{sec:introduction},
the existence of $w$ and its H\"{o}lder estimate are well-known (see e.g. \cite[Theorem 6.44]{MR2305115}).
By the comparison principle, we have
\[
\sup_{\Omega} w -  \inf_{\Omega} w \le \sup_{\partial \Omega} g - \inf_{\partial \Omega} g.
\]
As the proof of \eqref{eqn:cacioppolli}, we have
\[
\begin{split}
\int_{B(x, r)} |\nabla w|^{2} \, dx
& \le
C \left( \sup_{\Omega} w -  \inf_{\Omega} w \right)^{2} r^{n - 2}
\end{split}
\]
whenever $B(x, 4r) \subset \Omega$.
By \eqref{eqn:b} and $\eqref{eqn:mu}$, we obtain \eqref{eqn:esti_w}.
\end{proof}

\begin{theorem}\label{thm:inhom}
Suppose that \eqref{eqn:A}, \eqref{eqn:CDC}, \eqref{eqn:b} and \eqref{eqn:mu} hold.
Assume further that there is no non-trivial solution to \eqref{eqn:EV}.
Then, for each $\nu \in \mathsf{M}^{ n - 2 + \beta }(\Omega)$ and $g \in C^{\beta}(\partial \Omega)$, there exists
a unique weak solution $u \in H^{1}_{\loc}(\Omega) \cap C(\cl{\Omega})$ to \eqref{eqn:DE}.
Moreover, there exists a positive constant $\beta_{\star}$ depending only on $n$, $L$, $\beta$ and $\gamma$ satisfying \eqref{eqn:main_esti},
where $C$ is a positive constant independent of $\nu$ and $g$.
\end{theorem}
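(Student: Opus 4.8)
The plan is to reduce the inhomogeneous boundary value problem to the vanishing-boundary case already solved in Corollary \ref{cor:fredholm}, by subtracting off an $A$-harmonic extension of $g$. First I would invoke Lemma \ref{lem:BVP} to obtain the unique weak solution $w \in H^{1}_{\loc}(\Omega) \cap C(\cl{\Omega})$ to \eqref{eqn:DE0}, so that $-\divergence(A \nabla w) = 0$ in $\Omega$ and $w = g$ on $\partial \Omega$. The crucial output of that lemma is not merely the H\"older bound on $w$ but the fact that $\bm{b} \cdot \nabla w + \mu w \in \mathsf{M}^{n/(2-\beta)}(\Omega)$ together with the estimate \eqref{eqn:esti_w}; this is exactly what makes the lower-order terms applied to $w$ an admissible source. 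I would then set
\[
\tilde{\nu} := \nu - \left( \bm{b} \cdot \nabla w + \mu w \right) \in \mathsf{M}^{n/(2-\beta)}(\Omega),
\]
and, since by hypothesis \eqref{eqn:EV} has no non-trivial solution, apply case (ii) of Corollary \ref{cor:fredholm} to define $v := \mathbf{G}_{T} \tilde{\nu} \in H^{1}_{\loc}(\Omega) \cap C^{\beta_{1}}(\cl{\Omega})$, the unique solution of \eqref{eqn:nonEV} with source $\tilde{\nu}$. The candidate solution is $u := w + v$.

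Next I would verify that $u$ is a weak solution of \eqref{eqn:DE}. Testing the equation for $w$ gives $\int_{\Omega} A \nabla w \cdot \nabla \varphi \, dx = 0$, while $v$ satisfies $\int_{\Omega} A \nabla v \cdot \nabla \varphi + \bm{b}\cdot\nabla v \, \varphi \, dx + \int_{\Omega} v \varphi \, d\mu = \int_{\Omega} \varphi \, d\tilde{\nu}$ for every $\varphi \in C_{c}^{\infty}(\Omega)$. Expanding $\int_{\Omega} \varphi \, d\tilde{\nu} = \int_{\Omega} \varphi \, d\nu - \int_{\Omega} \bm{b}\cdot\nabla w\,\varphi \,dx - \int_{\Omega} w\varphi\,d\mu$ and adding the two identities, all the cross terms reassemble into $\int_{\Omega} A \nabla u \cdot \nabla\varphi + \bm{b}\cdot\nabla u\,\varphi\,dx + \int_{\Omega} u\varphi\,d\mu = \int_{\Omega} \varphi\,d\nu$, which is precisely the weak formulation of \eqref{eqn:DE}; every pairing makes sense because $u$ is bounded and continuous and $\nabla u \in L^{2}_{\loc}(\Omega)$. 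For the boundary condition, $v = 0$ and $w = g$ on $\partial\Omega$ (both attained continuously), so $u = g$ on $\partial\Omega$.

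Finally, I would assemble the estimate and argue uniqueness. From Lemma \ref{lem:BVP} one has $\| w \|_{C^{\beta_{0}}(\cl{\Omega})} \le C \| g \|_{C^{\beta}(\partial\Omega)}$ (the interior H\"older bound transfers to $\cl{\Omega}$ by continuity of $w$) and $\trinorm{\bm{b}\cdot\nabla w + \mu w}_{n/(2-\beta),\Omega} \le C\| w \|_{L^{\infty}(\Omega)} \le C\| g \|_{C^{\beta}(\partial\Omega)}$, whence $\trinorm{\tilde{\nu}}_{n/(2-\beta),\Omega} \le \trinorm{\nu}_{n/(2-\beta),\Omega} + C\| g \|_{C^{\beta}(\partial\Omega)}$. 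Boundedness of $\mathbf{G}_{T}$ from \eqref{eqn:g_t} then gives $\| v \|_{C^{\beta_{1}}(\cl{\Omega})} \le C\trinorm{\tilde{\nu}}_{n/(2-\beta),\Omega}$. Setting $\beta_{\star} := \min\{\beta_{0},\beta_{1}\}$, which depends only on $n$, $L$, $\beta$, $\gamma$ since $q = n/(2-\beta)$, and using that on the bounded set $\cl{\Omega}$ a higher H\"older norm dominates a lower one, the triangle inequality for $u = w + v$ yields \eqref{eqn:main_esti}. Uniqueness is immediate from linearity: the difference of two solutions of \eqref{eqn:DE} solves \eqref{eqn:EV}, which by hypothesis admits only the trivial solution.

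The argument is essentially an application of linearity together with the two solution operators $\mathbf{G}_{0}$ and $\mathbf{G}_{T}$ already constructed, so no single hard analytic step remains at this stage. The one point demanding care—and the reason Lemma \ref{lem:BVP} was isolated—is that the $A$-harmonic extension $w$ need not have finite energy, so the lower-order contribution $\bm{b}\cdot\nabla w + \mu w$ must be shown to lie in $\mathsf{M}^{n/(2-\beta)}(\Omega)$ rather than merely in $H^{-1}(\Omega)$; once this membership and its norm bound are in hand, the superposition and the Fredholm machinery of Corollary \ref{cor:fredholm} close the proof.
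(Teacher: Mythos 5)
Your proposal is correct and follows essentially the same route as the paper: subtract the $A$-harmonic extension $w$ from Lemma \ref{lem:BVP}, solve the zero-boundary problem with the modified source $\nu - \bm{b}\cdot\nabla w - \mu w \in \mathsf{M}^{n/(2-\beta)}(\Omega)$ via Corollary \ref{cor:fredholm}, and set $u = v + w$ with $\beta_{\star} = \min\{\beta_{0}, \beta_{1}\}$. The additional details you supply (the verification of the weak formulation for the sum, the explicit norm bookkeeping, and the uniqueness argument via \eqref{eqn:EV}) are all consistent with what the paper leaves implicit.
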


\begin{proof}
Let $w$ be a weak solution in Lemma \ref{lem:BVP}.
Consider the problem
\begin{equation}
\begin{cases}
- \divergence (A \nabla v) + \bm{b} \cdot \nabla v + \mu v
=
\nu - \bm{b} \cdot \nabla w - \mu w & \text{in} \ \Omega,
\\
v = 0 & \text{on} \ \partial \Omega.
\end{cases}
\end{equation}
The right-hand side is in $\mathsf{M}^{n - 2 + \beta}(\Omega)$.
By Corollary \ref{cor:fredholm},
this equation has a unique solution $v \in H^{1}_{\loc}(\Omega) \cap C^{\beta_{1}}(\cl{\Omega})$.
Then, $u = v + w \in C^{\beta_{\star}}(\cl{\Omega})$ is a solution satisfying \eqref{eqn:main_esti}, where $\beta_{\star} = \min\{ \beta_{1}, \beta_{0} \}$.
\end{proof}

We now completes the proof of Theorem \ref{thm:main}.

\begin{proof}[Proof of Theorem \ref{thm:main}]
By Proposition \ref{prop:positive_mu}, there is no non-trivial solution to \eqref{eqn:EV}.
By Theorem \ref{thm:inhom}, there exists a weak solution to \eqref{eqn:DE}.
The inequality \eqref{eqn:main_esti} follows from
Lemmas \ref{lem:poisson} and  \ref{lem:BVP} and the boundedness of $(I - T)^{-1}$ in Corollary \ref{cor:fredholm}.
\end{proof}

\section*{Acknowledgments}
The author would like to thank Prof. Hiroaki Aikawa for his valuable suggestions to improve the presentation of the paper.
The author also appreciates the helpful comments of the anonymous referee and the editor. 
This work was supported by JST CREST (doi:10.13039/501100003382) Grant Number JPMJCR18K3 and
JSPS KAKENHI (doi:10.13039/501100001691) Grant Number 23H03798.
The author also acknowledges the Research Institute for Mathematical Sciences, an International Joint Usage/Research Center located in Kyoto University.
This version of the manuscript has been proofread by ChatGPT to improve the language quality.



\bibliographystyle{abbrv} 
\bibliography{reference}

\begin{thebibliography}{10}

\bibitem{MR3467116}
D.~R. Adams.
\newblock {\em Morrey spaces}.
\newblock Lecture Notes in Applied and Numerical Harmonic Analysis.
  Birkh\"auser/Springer, Cham, 2015.

\bibitem{MR1924196}
H.~Aikawa.
\newblock H\"{o}lder continuity of the {D}irichlet solution for a general
  domain.
\newblock {\em Bull. London Math. Soc.}, 34(6):691--702, 2002.

\bibitem{MR644024}
M.~Aizenman and B.~Simon.
\newblock Brownian motion and {H}arnack inequality for {S}chr\"{o}dinger
  operators.
\newblock {\em Comm. Pure Appl. Math.}, 35(2):209--273, 1982.

\bibitem{MR856511}
A.~Ancona.
\newblock On strong barriers and an inequality of {H}ardy for domains in {${\bf
  R}^n$}.
\newblock {\em J. London Math. Soc. (2)}, 34(2):274--290, 1986.

\bibitem{MR1482989}
A.~Ancona.
\newblock First eigenvalues and comparison of {G}reen's functions for elliptic
  operators on manifolds or domains.
\newblock {\em J. Anal. Math.}, 72:45--92, 1997.

\bibitem{MR2018901}
N.~Bourbaki.
\newblock {\em Integration. {I}. {C}hapters 1--6}.
\newblock Elements of Mathematics (Berlin). Springer-Verlag, Berlin, 2004.
\newblock Translated from the 1959, 1965 and 1967 French originals by Sterling
  K. Berberian.

\bibitem{MR4492670}
S.-S. Byun, D.~K. Palagachev, and P.~Shin.
\newblock Global {H}\"{o}lder continuity of solutions to quasilinear equations
  with {M}orrey data.
\newblock {\em Commun. Contemp. Math.}, 24(8):Paper No. 2150062, 41, 2022.

\bibitem{MR1731467}
Z.-Q. Chen, R.~J. Williams, and Z.~Zhao.
\newblock On the existence of positive solutions for semilinear elliptic
  equations with singular lower order coefficients and {D}irichlet boundary
  conditions.
\newblock {\em Math. Ann.}, 315(4):735--769, 1999.

\bibitem{MR1354887}
F.~Chiarenza.
\newblock Regularity for solutions of quasilinear elliptic equations under
  minimal assumptions.
\newblock volume~4, pages 325--334. 1995.
\newblock Potential theory and degenerate partial differential operators
  (Parma).

\bibitem{MR0910581}
M.~Cranston and Z.~Zhao.
\newblock Conditional transformation of drift formula and potential theory for
  {${1\over 2}\Delta +b(\cdot)\cdot\nabla$}.
\newblock {\em Comm. Math. Phys.}, 112(4):613--625, 1987.

\bibitem{MR853783}
G.~Dal~Maso and U.~Mosco.
\newblock Wiener criteria and energy decay for relaxed {D}irichlet problems.
\newblock {\em Arch. Rational Mech. Anal.}, 95(4):345--387, 1986.

\bibitem{MR0964029}
G.~Di~Fazio.
\newblock H\"{o}lder-continuity of solutions for some {S}chr\"{o}dinger
  equations.
\newblock {\em Rend. Sem. Mat. Univ. Padova}, 79:173--183, 1988.

\bibitem{MR1283326}
G.~Di~Fazio.
\newblock Dirichlet problem characterization of regularity.
\newblock {\em Manuscripta Math.}, 84(1):47--56, 1994.

\bibitem{MR4491776}
G.~Di~Fazio, M.~S. Fanciullo, and P.~Zamboni.
\newblock Boundary regularity for strongly degenerate operators of {G}rushin
  type.
\newblock {\em Electron. J. Differential Equations}, pages Paper No. 65, 16,
  2022.

\bibitem{MR1814364}
D.~Gilbarg and N.~S. Trudinger.
\newblock {\em Elliptic partial differential equations of second order}.
\newblock Classics in Mathematics. Springer-Verlag, Berlin, 2001.
\newblock Reprint of the 1998 edition.

\bibitem{MR3334220}
T.~Hara.
\newblock A refined subsolution estimate of weak subsolutions to second order
  linear elliptic equations with a singular vector field.
\newblock {\em Tokyo J. Math.}, 38(1):75--98, 2015.

\bibitem{MR3455216}
T.~Hara.
\newblock Weak-type estimates and potential estimates for elliptic equations
  with drift terms.
\newblock {\em Potential Anal.}, 44(1):189--214, 2016.

\bibitem{hara2023global}
T.~Hara.
\newblock Global h\"{o}lder solvability of linear and quasilinear poisson
  equations.
\newblock {\em arXiv preprint arXiv:2311.09701}, 2023.

\bibitem{MR2305115}
J.~Heinonen, T.~Kilpel\"{a}inen, and O.~Martio.
\newblock {\em Nonlinear potential theory of degenerate elliptic equations}.
\newblock Dover Publications, Inc., Mineola, NY, 2006.
\newblock Unabridged republication of the 1993 original.

\bibitem{MR1828387}
S.~Hofmann and J.~L. Lewis.
\newblock The {D}irichlet problem for parabolic operators with singular drift
  terms.
\newblock {\em Mem. Amer. Math. Soc.}, 151(719):viii+113, 2001.

\bibitem{MR2140204}
A.~Ifra and L.~Riahi.
\newblock Estimates of {G}reen functions and harmonic measures for elliptic
  operators with singular drift terms.
\newblock {\em Publ. Mat.}, 49(1):159--177, 2005.

\bibitem{MR0244627}
O.~A. Ladyzhenskaya and N.~N. Ural'tseva.
\newblock {\em Linear and quasilinear elliptic equations}.
\newblock Academic Press, New York-London, 1968.
\newblock Translated from the Russian by Scripta Technica, Inc, Translation
  editor: Leon Ehrenpreis.

\bibitem{MR0271383}
H.~Lewy and G.~Stampacchia.
\newblock On the smoothness of superharmonics which solve a minimum problem.
\newblock {\em J. Analyse Math.}, 23:227--236, 1970.

\bibitem{MR1461542}
J.~Mal\'{y} and W.~P. Ziemer.
\newblock {\em Fine regularity of solutions of elliptic partial differential
  equations}, volume~51 of {\em Mathematical Surveys and Monographs}.
\newblock American Mathematical Society, Providence, RI, 1997.

\bibitem{MR163053}
V.~G. Maz'ja.
\newblock On the boundary regularity of solutions of elliptic equations and of
  a conformal mapping.
\newblock {\em Dokl. Akad. Nauk SSSR}, 152:1297--1300, 1963.

\bibitem{MR1501936}
C.~B. Morrey, Jr.
\newblock On the solutions of quasi-linear elliptic partial differential
  equations.
\newblock {\em Trans. Amer. Math. Soc.}, 43(1):126--166, 1938.

\bibitem{MR4662426}
M.~Mourgoglou.
\newblock Regularity theory and {G}reen's function for elliptic equations with
  lower order terms in unbounded domains.
\newblock {\em Calc. Var. Partial Differential Equations}, 62(9):Paper No. 266,
  69, 2023.

\bibitem{MR2760150}
A.~I. Nazarov and N.~N. Ural'tseva.
\newblock The {H}arnack inequality and related properties of solutions of
  elliptic and parabolic equations with divergence-free lower-order
  coefficients.
\newblock {\em Algebra i Analiz}, 23(1):136--168, 2011.

\bibitem{MR998128}
J.-M. Rakotoson and W.~P. Ziemer.
\newblock Local behavior of solutions of quasilinear elliptic equations with
  general structure.
\newblock {\em Trans. Amer. Math. Soc.}, 319(2):747--764, 1990.

\bibitem{MR4288177}
Y.~Sawano, G.~Di~Fazio, and D.~I. Hakim.
\newblock {\em Morrey spaces---introduction and applications to integral
  operators and {PDE}'s. {V}ol. {I}}.
\newblock Monographs and Research Notes in Mathematics. CRC Press, Boca Raton,
  FL, 2020.

\bibitem{MR2852216}
G.~Seregin, L.~Silvestre, V.~\v{S}ver\'{a}k, and A.~Zlato\v{s}.
\newblock On divergence-free drifts.
\newblock {\em J. Differential Equations}, 252(1):505--540, 2012.

\end{thebibliography}


\end{document}